\documentclass[a4paper,12pt]{amsart}
\usepackage{CJK}
\usepackage{romannum}
\usepackage{amsfonts}
\usepackage{mathtools}
\usepackage{amsmath,amscd}

\usepackage{ifthen}
\usepackage{amsrefs}
\usepackage{mathrsfs}
\usepackage{amsthm}
\usepackage{amssymb}

\usepackage{tikz-cd}
  \usepackage{graphicx}
 \usepackage{relsize}

\usepackage{MnSymbol}
\usetikzlibrary{positioning, decorations.text}

\usepackage{hyperref}
\usepackage{url}
\usepackage{etoolbox}
\usepackage{rotating}

\usepackage[shortlabels]{enumitem}
\usepackage[paper=a4paper,left=20mm,right=20mm,top=25mm,bottom=30mm]{geometry}

\usepackage{tikz}

\usetikzlibrary{backgrounds}
\usepackage{tkz-euclide}
\usepackage{xcolor}
\usetikzlibrary{trees,snakes,shapes.geometric}
\usepackage[outline]{contour}
\contourlength{1.5pt}
\usetikzlibrary{positioning}
\usetikzlibrary{%
  matrix,%
  calc,%
  arrows%
}

\setlist[enumerate]{topsep=0em, itemsep= -0em, parsep = 0 em, label=$(\alph*)$}

\nocite{*}

\newcommand{\cH}{\mathcal{H}}
\newcommand{\cL}{\mathcal{L}}
\newcommand{\cK}{\mathcal{K}}
\newcommand{\cM}{\mathcal{M}}

\newcommand{\cV}{\mathcal{V}}

\newcommand{\revddots}{\reflectbox{$\ddots$}}

\DeclareMathOperator{\rep}{rep}

\DeclareMathOperator{\rk}{rk}

\DeclareMathOperator{\modd}{mod}

\DeclareMathOperator{\dimu}{\underline{dim}}

\newtheorem{proposition}{Proposition}[section]
\newtheorem{Theorem}[proposition]{Theorem}
\newtheorem{Lemma}[proposition]{Lemma}

\newtheorem{corollary}[proposition]{Corollary}

\newenvironment{example}[1][Example.]{\begin{trivlist}
\item[\hskip \labelsep {\bfseries #1}]}{\end{trivlist}}

\newenvironment{Definition}[1][Definition.]{\begin{trivlist}
\item[\hskip \labelsep {\bfseries #1}]}{\end{trivlist}}

\author{Jie Liu}
\address{School of Mathematics and Statistics, Guangdong University of Technology,
Guangzhou 510520, People’s Republic of China}

\email{jie@gdut.edu.cn}

\address{Shenzhen International Center for Mathematics,   Southern University of Science and Technology, shenzhen 518055, People’s Republic of China }

\title{ elementary modules of the
generalized Kronecker quivers }

\begin{document}

\rmfamily


\thispagestyle{empty}

\maketitle

\begin{abstract}
Let $k$ be an algebraically closed field. The generalized or $n$-Kronecker quiver $K(n)$ is the  quiver with two vertices, called a source and a sink, and $n$ arrows from source to sink.  We use  $\modd\cK_n$ to denote the category of the finite-dimensional modules of the path algebra  $\cK_n=kK(n)$.  There exist  a function called dimension vector  $\dimu: \modd \cK_n\rightarrow \mathbb{Z}^2; M \mapsto (\dim_k M_1, \dim_k M_2 )$, and two  functors $\sigma: \mathbb{Z}^2 \rightarrow \mathbb{Z}^2; (x,y)\mapsto (nx-y,x)$, and $\delta: \mathbb{Z}^2\rightarrow \mathbb{Z}^2;(x,y)\mapsto (y,x)$.  Then the set  $\mathbf{F}=\{(x,y)\mid \frac{2}{n}x\leq y\leq x\}$ is the  fundamental domain of the dimension vectors of  regular modules  under the action of   $\delta$ and $\sigma$ in  $\modd\cK_n$.  We call a regular module $M\in \modd\cK_n$ \textit{elementary} if there does not exist a short exact sequence $(0)\rightarrow L\rightarrow M\rightarrow N\rightarrow (0)$ with $L,N$ being non-zero regular.  In this note, we focus on the set $\mathbf{F}$,  and when $x<n$, we establish a one-to-one correspondence between  elementary modules and  spaces of fixed rank.
 
\end{abstract}

\providecommand{\keywords}[1]
{
  \small	
  \textbf{\text{Keywords:}} #1
}
\keywords{Generalized Kronecker quiver; Fundamental domain; Elementary modules;}

2020  \textit{Mathematics subject classification:} 	16G20; 	16G70

\section{introduction}

 Let $k$ be an algebraically closed  field, and let $Q$ be a finite, connected and wild quiver without  oriented cycles. If we use  $\rep_k(Q)$ to  denote the finite-dimensional representations of $Q$,  then there  exists an equivalence between categories  $\rep_k(Q)$ and  $\modd kQ$, where $kQ$ is the path algebra of quiver $Q$. Hence  we will frequently identify these two subcategories.

 Since the representation type of $Q$ is wild \cite[1.3]{Kerner}, it is hopeless to classify all the indecomposable modules. Hence it is desirable to find the modules with certain property in the category $\modd kQ$. Let $\tau$ be the Auslander--Reiten translation on $\modd kQ$, and let $M\in \modd kQ$ be an indecomposable module. We  say that $M$ is \textit{regular}, provided $\tau^t M\neq (0)$ for all $t\in \mathbb{Z}$.   Suppose that $M$ is regular.  Then  $M$ is said to be \textit{elementary} (or $\dimu M$ is \textit{elementary}) if there is no short exact sequence $(0)\rightarrow L\rightarrow M\rightarrow N\rightarrow (0)$ with $L,N\in\modd kQ$ being non-zero regular
 modules. Clearly, a module $M$ is elementary if and only if $\tau M$
 is elementary. For any module $M$ there exists a filtration 
 \begin{center}
   $(0)=M_0\subseteq M_1 \subseteq M_2\subseteq \cdots \subseteq M_p=M$
 \end{center}
such that $M_i/M_{i-1}$ is an elementary module, $1 \leq i\leq p$. Hence we can take elementary modules as simple objects in the full subcategory of $\modd kQ$ consisting of regular modules.  Unfortunately, for the wild quiver $Q$, not much is known about its elementary modules.

 Since the  generalized  Kronecker quiver $K(n)$
 \[
\begin{tikzcd}
    1 \circ
  \arrow[r, draw=none, "{\vdots}" description]
    \arrow[r, bend left,        "\gamma_1"]
    \arrow[r, bend right, swap, "\gamma_n"]
    &
    \circ 2 
\end{tikzcd}
\]
is particularly of interest, it has been studied by some authors (cf.\cite{Claus2}, \cite{Daniel2}), where $n\geq 3$. Depending on their discussion, we want to locate the elementary modules in $\modd\cK_n$. Using the Euler--Ringel form   $q(x,y)=x^2+y^2-nxy$  on the dimension vectors in $\modd \cK_n$, we say that  a dimension vector  $(x,y)$ is  \textit{regular}  if $q(x,y)<0$. Let $\mathbf{R}$ be the set of regular dimension vectors. By abusing notation, we introduce two maps  $\sigma, \delta$ on the set $\mathbf{R}$,  where $\sigma(x,y)=(nx-y,x)$ and $\delta(x,y)=(y,x)$ for all $ (x,y)\in \mathbf{R}$.  In particular, these two maps induce two functors (we still use the same symbols) $\sigma, \delta:\modd \cK_n\rightarrow \modd \cK_n$ such that a regular module $M$ is elementary if and only if  $\sigma (M)$ or $\delta(M)$ is elementary module. Otto Kerner and Frank Lukas have shown that there are only finitely many   $(\sigma)^2$-orbits of dimension vectors of elementary modules for $\mathcal{K}_n$ (cf. \cite{Otto}),  it is possible to classify these  dimension vectors.

According to \cite{Daniel2}, the set $\mathbf{F}=\{(x,y)\mid \frac{2}{n}x\leq y\leq x\}$ is the fundamental domain of the set $\mathbf{R}$  under the action of $\sigma$ and $\delta$. Hence we only need to focus on the set $\mathbf{F}$.  Let $ (x,y)\in \mathbf{F}$ be an elementary dimension vector. Claus Michael Ringel classified the  elementary modules for $K(3)$ in 2016 (cf. \cite{Claus2}), and  the author  showed that $x<2n$ for $K(n)$ \cite[Lemma 3.6]{jie}. Recently, Daniel Bissinger proved that $y<n$ \cite[Proposition 3.4]{Daniel2}.  Combining with  these facts, we describe the elementary module when $n<x<2n$, and we construct  a class of elementary modules $\{X(x,y;x+y-1)\mid  (x,y)\in \mathbf{F}\}$ when $x<n$.

Let  $V^i$ denote the $i$-dimensional vector space over the field $k$, and let $L(V^j,V^i)$ be the linear space consisting of all $k$-linear transformations from  $V^j$ to  $V^i$. We follow Roy Westwick's notation, and  define $l(r,j,i)=\dim_k \cH$, where  $\cH  \subseteq L(V^j,V^i)$ is  a maximum linear subspace such
that  $\rk v=r$ for all $0\neq v\in \cH$. Such subspaces $\cH$ have been studied for many years (cf. \cite{West}, \cite{john}, \cite{de}), and it is our first time to connect them with certain modules of Kronecker quivers. We want to find the relationship between subspaces $\cH$ and elementary modules. Let   $(x,y)\in \mathbf{F}$ with $x<n$. We use $\cM(x,y)$ to denote the set of elementary modules with dimension vector $(x,y)$ for $K(x+y-1)$, and use $\cL(x,y)$ to denote the set of maximum linear subspaces $\cH$ in $L(V^y, V^{x+y-1})$ satisfying $\rk v=y$ for any $0\neq v\in\cH$.  Then we have

\begin{Theorem}
Let   $(x,y)\in \mathbf{F}$ with $x<n$. Then there exists a bijection map between the set $\cM(x,y)$ and the set $\cL(x,y)$ under the isomorphisms.

\end{Theorem}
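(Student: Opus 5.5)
The plan is to translate a representation of $K(m)$ with $m=x+y-1$ and dimension vector $(x,y)$ directly into an $x$-dimensional space of $y\times m$ matrices. Then we show that ``elementary'' corresponds exactly to ``every nonzero element has full rank $y$''.

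\textbf{The dictionary.} Let $M=(V^x,V^y;\gamma_1,\dots,\gamma_m)$. For $u\in V^x$ put
\[
\varphi_u\colon k^m\to V^y,\qquad e_i\mapsto \gamma_i(u).
\]
Dualizing gives $\psi_u=\varphi_u^{*}\colon (V^y)^*\to (k^m)^*$. Identifying the duals with $V^y$ and $V^{x+y-1}$, we get $\psi_u\in L(V^y,V^{x+y-1})$, and $u\mapsto\psi_u$ is linear. Set $\cH_M=\{\psi_u\mid u\in V^x\}$.

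Conversely, a subspace $\cH\subseteq L(V^y,V^{x+y-1})$ together with a basis $h_1,\dots,h_x$ defines $\gamma_i$ by reading off the $i$-th coordinates of the $h_j^{*}$. Changing the basis of $\cH$ is exactly the $GL_x$-action at the source vertex. The $GL_y$-action at the sink is the natural action on $L(V^y,V^{x+y-1})$ by precomposition. So isomorphism classes of modules whose map $u\mapsto\psi_u$ is injective correspond to $x$-dimensional subspaces up to the relevant isomorphisms.

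Note that $\rk\psi_u=y$ iff $\varphi_u$ is surjective, i.e. iff the submodule generated by $u$ has dimension vector $(1,y)$. This forces $\psi_u\neq 0$ for $u\ne0$, so $\dim\cH_M=x$. By Westwick's bound $l(y,y,x+y-1)=(x+y-1)-y+1=x$, such an $\cH_M$ is automatically maximal, and every maximal rank-$y$ space has dimension exactly $x$. The bijection therefore reduces to the following key claim.

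\textbf{Key claim.} $M$ is elementary iff $\varphi_u$ is surjective for every $0\neq u\in V^x$.

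For ``$\Leftarrow$'': any submodule $(U,W)$ with $U\neq0$ contains $\varphi_u(k^m)=V^y$, so it has dimension vector $(a,y)$. The quotient is then $(x-a,0)$, a sum of simple injectives, hence not regular. A submodule with $U=0$ is a sum of simple projectives, hence not regular. Thus no short exact sequence with both ends nonzero regular exists.

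We must also check that $M$ is indecomposable and regular. Regularity comes from $q(x,y)<0$, using $(x,y)\in\mathbf F$ and $m\ge x+1$. For indecomposability: a decomposition $M=M'\oplus M''$ with $M'_1\neq0$ forces $M'_2=V^y$, and then $M''$ is a sum of simple injectives $S_1$. Such summands are excluded because $\varphi_u$ is surjective for all $u\ne0$. We also rule out summands that are simple projectives $S_2$.

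For ``$\Rightarrow$'': suppose $\varphi_u(k^m)=W$ with $\dim W=b<y$. Enlarge $W$ to a hyperplane $W'\supseteq W$ of $V^y$, and take $L=(ku,W')$ of dimension $(1,y-1)$ with quotient of dimension $(x-1,1)$. We have
\[
q(1,y-1)=1+(y-1)^2-m(y-1)<0 \quad\text{for } y\ge2,
\]
and symmetrically $q(x-1,1)<0$ for $x\ge2$.

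\textbf{Main obstacle.} The hard step is ensuring that these $L$ and $N$ are genuinely regular modules and not merely of regular dimension vector. This means showing they have no preprojective or preinjective direct summands, for a suitably generic choice of the hyperplane $W'$.

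My first attempt is to use the freedom in choosing $W'\supseteq W$, and in replacing $ku$ by a larger $U$ with $\varphi(U\otimes k^m)\subseteq W'$. With this freedom I would pick $L$ and $N$ to be Kronecker modules whose dimension vectors are regular and which admit no maps from or to $S_1$ and $S_2$. If this fails, I would invoke the known description of elementary dimension vectors in $\mathbf F$ to handle the boundary cases $y=1$ or $x=1$ separately.
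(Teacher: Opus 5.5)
Your set-up matches the paper's. Your linear map $u\mapsto\psi_u$ is the paper's $m\mapsto A_m$, Westwick's bound $l(y,y,x+y-1)=x$ gives maximality, and basis change at the source versus the $GL_y$-action at the sink accounts for ``under the isomorphisms''. Your ``$\Leftarrow$'' argument is also correct: a submodule with nonzero top has dimension vector $(a,y)$, so the quotient is a sum of copies of $S(1)$, and you handle indecomposability. The paper never proves your key claim; it imports both directions from Lemma~\ref{n-1} (via Lemma~\ref{dimen} and Corollary~\ref{equal}).

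The gap is your ``$\Rightarrow$'' direction, which you leave open, and the repair you sketch cannot work as stated. If $\dim W<y-1$, then for \emph{every} hyperplane $W'\supseteq W$ one has $(ku,W')\cong(ku,W)\oplus S(2)^{\,y-1-\dim W}$. So $L$ always has simple projective summands and is never a regular module, whatever $W'$ you choose. The variant with a larger $U$ is not carried out.

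The missing idea is that genuinely regular $L$ and $N$ are not needed. Lemma~\ref{regular} (Ringel) says that a regular $M$ with a proper nonzero submodule $U$ such that $\dimu U$ and $\dimu M/U$ are merely regular \emph{vectors} is not elementary. The reason is as follows. Suppose $M$ is elementary and $U$ is not preprojective. Then the largest submodule $t(U)\subseteq U$ without preprojective summands is nonzero and regular. Write $M/t(U)=R'\oplus I'$ with $R'$ regular and $I'$ preinjective. The preimage of $I'$ in $M$ is regular with quotient $R'$, so $R'=0$ and $M/U$ is preinjective. Finally, dimension vectors of direct sums of preprojective (resp. preinjective) modules lie in the convex cone $y>\mu x$ (resp. $y<\mu^{-1}x$), where $\mu$ is the larger root of $t^2-mt+1$, and there $q>0$. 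With this, your submodule $(ku,W')$ finishes the direction at once, since $q(1,y-1)=1-x(y-1)<0$ and $q(x-1,1)=1-(x-1)y<0$.

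The boundary case also cannot be ``handled separately''. For $y=1$ the vector $(x,1)$ satisfies $q(x,1)=1$ over $K(x)$, so $\cM(x,1)=\emptyset$ while $\cL(x,1)=\{L(V^1,V^x)\}$. The statement tacitly requires $y\geq 2$, as does Westwick's bound, which needs $r\geq 2$. Under that assumption $x\geq y\geq 2$, which is exactly what your two inequalities use.
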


 \section{preliminaries}
 
 In this section, we present a few concepts and some  background information.  For  convenience, we will give some definitions in a short way. A thorough introduction to this part can be found in  \cite[\Romannum{2}-\Romannum{7}]{Assem1}.
 
 A \textit{quiver} is just an oriented graph (loops and multiple arrows are allowed), usually denoted by  $Q=(Q_0, Q_1, s, t)$, where $s,t: Q_1\rightarrow Q_0$. The elements in $Q_0$ are called \textit{points} or \textit{vertices}, and the elements in $Q_1$ are called \textit{arrows}, respectively. A finite-dimensional representation $M=((M_x)_{x\in Q_0}, (M(\alpha))_{\alpha\in Q_1})$ over $Q$ consists of vector spaces $M_{x}$ and $k$-linear maps $M(\alpha): M_{s(\alpha)} \rightarrow M_{t(\alpha)}$ such that $\dim_{k}M:=\sum _{x\in Q_0}\dim_k M_{x}$ is finite. A \textit{morphism} $f: M\rightarrow N$ between two representations is a collection of $k$-linear maps $(f_z)_{z\in Q_0}$ such that for each arrow $\alpha: x\rightarrow y$ there is a commutative diagram

\begin{center}
$\begin{array}[c]{ccc}
M({x})&\stackrel{M(\alpha)}{\longrightarrow}&M(y)\\
\downarrow\scriptstyle{f_x}&&\downarrow\scriptstyle{f_y}\\
N(x)&\stackrel{N(\alpha)}{\longrightarrow}&N(y).
\end{array}$
\end{center}
We thus define a category $\rep_k(Q)$  of  finite-dimensional  representations  of $Q$.  
Normally, we will use  $S(i)$ to denote the simple representation and $P(i)$ (resp. $I(i)$) to denote the projective  (resp. injective) representation at the vertex $i, i\in Q_0.$    
 
In this note, we focus on the  $n$-Kronecker quiver $K(n)$ and we always assume that $n\geq 3$.   For the dimension vector $\dimu$ on $\modd \cK_n$, if  $(0)\rightarrow L \rightarrow M \rightarrow N\rightarrow (0)$ is an exact sequence in $\modd \cK_n$, then
\begin{center}
 $\dimu L+\dimu N= \dimu M$.
\end{center}
We denote by $<-,->$    the  bilinear form
 \begin{center}
$<-,->: \mathbb{Z}^2 \times \mathbb{Z}^2 \rightarrow \mathbb{Z}, ((x_1,x_2),(y_1,y_2))\mapsto (x_1y_1+x_2y_2)-nx_1y_2.$
\end{center}
This bilinear form  coincides with the Euler--Ringel form on the Grothendieck group $K_0(\cK_n)\cong \mathbb{Z}^2$. Then we denote the corresponding quadratic form by
\begin{center}
$q:\mathbb{Z}^2\rightarrow \mathbb{Z},x\mapsto <x,x>$.

\end{center}

\begin{Definition}
A dimension vector $(x,y)$ for $\cK_n$ is said to be regular,  provided $q(x,y)<0$. 

\end{Definition}

Let $\sigma, \sigma^-$ be the Bernstein-Gelfand-Ponomarev reflections (or BGP-functors, ) of $K_0(\cK_n)$ given by $\sigma(x,y)=(nx-y,x), \sigma^-(x,y)=(y,ny-x)$. Moreover,  we still use   $\sigma, \sigma^-$ to denote the BGP functors on $\modd \cK_n$ (we  take  the opposite of the $n$-Kronecker quiver to be again the $n$-Kronecker quiver).  We sometimes call such $\sigma, \sigma^-$ the \textit{shift functors} of category $\modd\cK_n$. Let $M\in\modd \cK_n$ be an indecomposable module.  Then

\begin{enumerate}
\item[(1)]    $M$ is said to be  \textit{preinjective},  provided there exists  $t\in \mathbb{N}_0$ such that  $\sigma^{-t} M =(0)$.

\item[(2)]  $M$ is said to be \textit{preprojective},  provided there exists  $t\in \mathbb{N}_0$ such that $\sigma^{t} M=(0)$. 

\item[(3)] A module is said to be \textit{regular} if it is neither   preprojective nor preinjective.

\end{enumerate}
Actually, we have $\tau=\sigma^2$ (cf. \cite{Gabriel}). If $M\in\modd\cK_n$ is an indecomposable module different from $S(2)$, then  $\dimu \sigma M=\sigma\dimu M$; similarly, if $M$ is indecomposable and different from $S(1)$, then we have $\dimu \sigma^-M=\sigma^- \dimu M$. When module $M$ is an elementary module, the module $\sigma^t M$  is also an elementary module for all $t\in \mathbb{Z}$ \cite[VII. Corollary 5.7$(d)$]{Assem1}. Equivalently, a module is elementary if and only if its submodule is preprojective or its factor module is preinjective \cite[Proposition 3.2]{Daniel2}.

\begin{Definition}
The dimension vector $(x,y)$ is said to be \textit{elementary} (\textit{preprojective}, or \textit{preinjective}),  provided there exists an  elementary  (preprojective, or preinjective)  module $M$ with $\dimu M=(x,y)$.
\end{Definition}

We now consider  the set   of regular dimension vectors  $\mathbf{R}=\{(x,y)\in \mathbb{N}^2\mid q(x,y)<0\}$. We have seen that $\sigma$ maps $\mathbf{R}$ onto $\mathbf{R}$. In fact, there is another transformation $\delta$ on $K_0(\cK_n)$ defined by $\delta(x,y)=(y,x)$, and  it also sends $\mathbf{R}$ onto $\mathbf{R}$.  Let $M\in \modd \cK_n$.  Then $\delta (\dimu M)=\dimu M^*$,  where $M^*$ is the dual representation of $M$, that is,  $M^*=(M^*_1,M^*_2,(M^*(\gamma_i))_{1\leq i\leq n})$, $M^*_1$  is the $k$-dual of $M_2$ and $M^*_2$ is the $k$-dual of $M_1$, the map $M^*(\gamma_i)$ is the $k$-dual of $M(\gamma_i)$. We put

\begin{center}
$\mathbf{F}=\{(x,y)\mid \frac{2}{n}x\leq y\leq x\}.$
\end{center}

Then we have 
\begin{Lemma}\cite[Section 2. Lemma]{Claus2}
The set $\mathbf{F}$ is a fundamental domain for the action of the group generated by $\delta$ and $\sigma$ on the set $\mathbf{R}.$ 
\end{Lemma}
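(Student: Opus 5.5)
The plan is to reduce the action to that of an infinite dihedral group generated by two reflections, and then run a descent argument. The reflections in question have the two boundary lines of $\mathbf{F}$ as their fixed lines.

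\textbf{Generators and $\mathbf{F}\subseteq\mathbf{R}$.} Put $\rho:=\sigma\delta$, so that $\rho(x,y)=(ny-x,y)$. We have $\delta^2=\rho^2=\mathrm{id}$, and $\sigma=\rho\delta$. Hence the group $G$ generated by $\delta$ and $\sigma$ equals the group generated by the two involutions $\delta$ and $\rho$. The fixed line of $\delta$ is $y=x$, and the fixed line of $\rho$ is $y=\frac{2}{n}x$; these are exactly the two boundary rays of $\mathbf{F}$.

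Both maps are linear and preserve $q$, since they are compositions of the isometries $\sigma$ and $\delta$ of the Euler form. They also preserve positivity on $\mathbf{R}$. Indeed, $q(x,y)<0$ forces the slope $t=y/x$ into $(\alpha^{-1},\alpha)$, where $\alpha=\frac{n+\sqrt{n^2-4}}{2}$. Since $\alpha^{-1}>1/n$, we get $ny-x>0$.

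To check $\mathbf{F}\subseteq\mathbf{R}$, take $(x,y)$ with $\frac2n x\le y\le x$. Then $q(x,y)\le x^2+y^2-2x^2=y^2-x^2\le 0$. Equality would force $y=x$, and then $q=(2-n)x^2<0$ because $n\ge 3$. So $q<0$ on $\mathbf{F}$.

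\textbf{Every orbit meets $\mathbf{F}$.} Start from $(x,y)\in\mathbf{R}$.
\begin{enumerate}
\item[(1)] If $y>x$, apply $\delta$, so that we may assume $y\le x$.
\item[(2)] If now $y<\frac2n x$, apply $\rho$. This gives $x'=ny-x$, with $0<x'<x$, while $y$ is unchanged.
\item[(3)] Return to step (1).
\end{enumerate}
The quantity $x+y$ is a positive integer. It is unchanged by $\delta$ and strictly decreases under each application of $\rho$ in step (2). So the process terminates, and it can only stop at a point satisfying $\frac2n x\le y\le x$, that is, at a point of $\mathbf{F}$.

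\textbf{Uniqueness up to the boundary.} This is the main step. Every nonidentity $g\in G$ is an alternating reduced word in $\delta$ and $\rho$. I would show that for $p=(x,y)\in\mathbf{F}$ and such a reduced word $g$, either $g(p)\notin\mathbf{F}$, or $g(p)=p$ with $p$ lying on a boundary ray fixed by the first letter applied.

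I would track the coordinate sum $s=x+y$ along the word. Starting in $\mathbf{F}$:
\begin{enumerate}
\item[(i)] $\rho$ sends $y\ge\frac2n x$ to $x'=ny-x\ge x$, so $s$ does not decrease, with equality exactly on $y=\frac2n x$.
\item[(ii)] After $\rho$ the point lies in the region $x\le \frac{n}{2}y$, where the next letter $\delta$ keeps $s$ and swaps the coordinates.
\item[(iii)] After $\delta$, a further $\rho$ sends $x$ to $ny-x$. Since now $y>x$, this makes $s$ strictly larger.
\end{enumerate}
By induction, after the first letter the images leave $\mathbf{F}$ through the other side, and every later letter strictly increases $s$. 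The key inequality is that $\rho$ and $\delta$ map the slope intervals $[\frac2n,1]$, $[\frac1{n-1},\frac2n]$, and so on, monotonically and adjacently. Concretely, on slopes $\rho$ acts by $t\mapsto t/(nt-1)$ and $\delta$ by $t\mapsto 1/t$. The translates $g(\mathbf{F})$ therefore tile the cone $\alpha^{-1}<t<\alpha$ with disjoint interiors.

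Consequently, two points of $\mathbf{F}$ in the same $G$-orbit coincide, or are identified by $\delta$ or $\rho$ on the corresponding boundary ray; there they are in fact equal, since those rays are fixed pointwise. This proves that $\mathbf{F}$ is a fundamental domain.

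The main obstacle is organizing the inequalities in (iii) cleanly, that is, verifying that the slope intervals are nested and adjacent and accumulate only at $\alpha^{\pm1}$. For this I would use the explicit Möbius formulas above together with the fact that $\alpha^{\pm1}$ are the fixed points of $\sigma$ on slopes.
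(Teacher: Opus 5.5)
The paper gives no argument for this lemma; it is quoted from Ringel. So your proof has to stand on its own.

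\textbf{What is already correct.} The set-up is right. The map $\rho=\sigma\delta\colon(x,y)\mapsto(ny-x,y)$ is the reflection in the line $y=\frac2n x$. So $G=\langle\delta,\rho\rangle$ is generated by the reflections in the two boundary lines of $\mathbf F$, and both generators preserve $q$ and $\mathbf R$. You also correctly show $\mathbf F\setminus\{(0,0)\}\subseteq\mathbf R$; note that your equality case needs $x\neq 0$, since the origin lies in $\mathbf F$ as literally defined. The descent on $x+y$ is a complete proof that every orbit meets $\mathbf F$.

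\textbf{The gap is in the uniqueness half.}

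First, step (ii) has the inequality reversed. For $(x,y)\in\mathbf F$ one has $x\le\frac n2 y$. Hence $\rho(x,y)=(ny-x,y)$ satisfies $ny-x\ge\frac n2 y$, so the image lies beyond the mirror $y=\frac2n x$. With the region you wrote, the claim ``now $y>x$'' in (iii) would not follow.

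Second, and more importantly, monotonicity of $s=x+y$ cannot show $g(p)\notin\mathbf F$. The set $\mathbf F$ is a cone containing points of every size, so a growing $s$ is compatible with re-entering $\mathbf F$. Also, the $\delta$ letters do not increase $s$ at all.

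What you need is a region invariant. This is precisely the slope statement you postpone as ``the main obstacle'', and it is short:
\begin{enumerate}
\item[(1)] On slopes, $\rho(t)=t/(nt-1)$ is a decreasing involution of $(\frac1n,\infty)$ fixing $\frac2n$. So $\rho$ maps $(\frac2n,\infty)$ into $P=(\frac1n,\frac2n)$.
\item[(2)] The map $\delta$ sends $P\cup\{\frac2n\}$ into $Q=(1,\infty)$.
\item[(3)] For $p\in\mathbf F$, the slope of $\rho(p)$ lies in $[\frac1{n-1},\frac2n]$. It lies in $P$ unless $p$ is on $y=\frac2n x$, where $\rho(p)=p$.
\item[(4)] The slope of $\delta(p)$ lies in $[1,\frac n2]$. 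It lies in $Q$ unless $p$ is on the diagonal, where $\delta(p)=p$.
\item[(5)] Hence after the second letter of a reduced word you are in $P\cup Q$. Ping-pong ($\rho(Q)\subseteq P$, $\delta(P)\subseteq Q$) keeps you there.
\end{enumerate}
So for every reduced word of length at least $2$, the slope of $g(p)$ avoids $[\frac2n,1]$. This is exactly your claimed dichotomy. You need neither $s$ nor the accumulation of the intervals at $\alpha^{\pm1}$, since covering already comes from your descent.
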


For the elementary modules, we have 

\begin{Lemma} \cite[Lemma 3.1]{Claus2}\label{regular}
Assume that $M\in\modd \mathcal{K}_n$ is a regular module with a proper non-zero submodule $U$ such that both dimension vectors $\dimu U$ and $\dimu M/ U$ are regular. Then $M$ is not elementary.
\end{Lemma}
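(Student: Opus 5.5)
The plan is to argue by contradiction using the characterization quoted in the preliminaries: a regular module $M$ is elementary if and only if every proper non-zero submodule $U\subseteq M$ is preprojective or has preinjective factor $M/U$ \cite[Proposition 3.2]{Daniel2}. Here ``preprojective'' means that every indecomposable direct summand is preprojective, and similarly for ``preinjective''. So suppose $M$ is elementary and $U$ is as in the statement. Then either $U$ is a (possibly decomposable) preprojective module, or $M/U$ is a (possibly decomposable) preinjective module. It therefore suffices to show that no such module can have a regular dimension vector, i.e. that $q(\dimu X)\ge 0$ whenever $X\neq 0$ is a direct sum of preprojective indecomposables, and dually for preinjectives.

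For this I will locate the dimension vectors in the plane. Let $\alpha=\frac{n+\sqrt{n^2-4}}{2}>1$ be the larger root of $t^2-nt+1=0$, so that $\alpha^{-1}$ is the other root. Write a vector $(x,y)$ with $x,y\ge 0$. Then $q(x,y)=x^2+y^2-nxy<0$ exactly when $\alpha^{-1}<y/x<\alpha$; this is the regular cone.

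The first step is to show that every indecomposable preprojective module $P$ satisfies $\dimu P=(a,b)$ with either $a=0$ (this is $P(2)=S(2)$) or $b/a>\alpha$. The indecomposable preprojectives are $\sigma^{-t}P(i)$, whose dimension vectors are obtained from $(0,1)$ and $(1,n)$ by iterating $\sigma^-(x,y)=(y,ny-x)$. One checks by induction that the slope $s=b/a$ transforms as $s\mapsto n-1/s$. This map fixes $\alpha$ and sends the interval $(\alpha,\infty]$ into itself. Since the starting slopes are $\infty$ and $n>\alpha$, all slopes stay strictly above $\alpha$. Dually, applying $\delta$ and the duality $M\mapsto M^*$, every indecomposable preinjective has slope $b/a<\alpha^{-1}$, or $b=0$.

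The second step is the additivity argument. The set $C=\{(a,b)\neq 0 \mid a,b\ge 0,\ b>\alpha a\}$ is an open convex cone (together with the ray $a=0$) and is closed under addition. Hence $\dimu U$, being a sum of vectors in $C$, lies in $C$, and consequently $q(\dimu U)>0$. This contradicts the assumption that $\dimu U$ is regular. The same argument applied to the cone $b<\alpha^{-1}a$ handles the case where $M/U$ is preinjective. Both alternatives of \cite[Proposition 3.2]{Daniel2} are thus excluded, so $M$ is not elementary.

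The main obstacle is the first step. The slope recursion has to be checked carefully, including the boundary behaviour. Regularity of $\dimu U$ must be a strict inequality $q<0$, so I need strict slopes $>\alpha$. This holds because the fixed point $\alpha$ is irrational when $n\ge 3$ and is never attained. The rest of the argument is a formal consequence of convexity.
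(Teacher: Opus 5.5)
Your proof is correct. The paper gives no argument of its own for this lemma; it simply imports it from \cite[Lemma 3.1]{Claus2}.

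Your route splits the lemma into two halves.

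\emph{Module-theoretic half.} You take this from the characterization \cite[Proposition 3.2]{Daniel2} quoted in the preliminaries. Your reading of it is the correct one: every proper non-zero $U\subseteq M$ is preprojective or $M/U$ is preinjective, where preprojective/preinjective allows decomposable modules.

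\emph{Numerical half.} You prove this yourself: a non-zero module whose indecomposable summands are all preprojective (resp.\ preinjective) has $q>0$. The cone argument is the right tool here. The tempting shortcut $q(\dimu P)=\dim_k\operatorname{End}(P)-\dim_k\operatorname{Ext}^1(P,P)$ does not by itself give positivity for decomposable $P$, since for instance $\operatorname{Ext}^1(\tau^{-1}P(2),P(2))\neq 0$. By contrast, the linear form $(a,b)\mapsto b-\alpha a$ is positive on every preprojective dimension vector and hence on all their sums. Strictness also does not need the irrationality of $\alpha$: $s\mapsto n-1/s$ is strictly increasing and the orbit starts strictly above $\alpha$.

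What your write-up leaves inside the citation is the construction of a non-zero regular submodule with non-zero regular quotient. This is the actual substance of Ringel's lemma, and it is a short torsion-pair argument you could add to make the proof self-contained.

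Suppose $U$ is not preprojective and $M/U$ is not preinjective.
\begin{enumerate}
\item Since $M$ is regular, $M/U$ has no preprojective summand. Let $L\supseteq U$ be the preimage of the preinjective part of $M/U$. Then $M/L$ is non-zero and regular.
\item Since $L\subseteq M$, $L$ has no preinjective summand. Let $L_R$ be its regular part. It is non-zero, because $U\subseteq L$ and submodules of preprojective modules are preprojective.
\item The sequence $0\to L/L_R\to M/L_R\to M/L\to 0$ has $L/L_R$ preprojective and $M/L$ regular. So $M/L_R$ has no preinjective summand. Being a quotient of $M$, it has no preprojective summand either.
\end{enumerate}
Hence $0\to L_R\to M\to M/L_R\to 0$ shows that $M$ is not elementary.

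Combined with your cone computation, this gives a complete proof that does not depend on \cite{Daniel2}. Relying on the quoted characterization, as you do, is also legitimate in the context of this paper.
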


\section{Dimension vectors of elementary modules }

By duality and Lemma \ref{regular},  a module $M\in$ mod $\cK_n$ is elementary if and only if its dual $M^*$ is elementary. That is, if $\dimu M=(x,y)$, then $(x,y)$ is elementary if and only if $(y,x)$ is elementary. Hence we only need to study one of these two dimension vectors. Furthermore

\begin{Lemma}\cite[Lemma 14.11]{Daniel}\label{less n}
Let $M\in \modd \cK_n$  be an elementary module with $\dimu M=(x,y)$ and $y\leq x\leq y+n-2.$ Then $x< n.$
\end{Lemma}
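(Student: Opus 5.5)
The plan is to argue by contradiction via the submodule criterion recalled in the preliminaries: a regular module $M$ is elementary if and only if each of its proper non-zero submodules $U$ is preprojective or has $M/U$ preinjective. Lemma \ref{regular} is the dimension-vector version of this. So assume $M$ is elementary with $\dimu M=(x,y)$, $y\leq x\leq y+n-2$, and $x\geq n$. The aim is to produce a submodule $U$ that is neither preprojective nor has $M/U$ preinjective.

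\textbf{Step 1 (a cyclic submodule of small dimension).} Consider the incidence variety
\[
Z=\{([\lambda],[v])\in\mathbb{P}^{n-1}\times\mathbb{P}(M_1)\mid \textstyle\sum_i\lambda_i M(\gamma_i)v=0\}.
\]
It is cut out by $y$ bihomogeneous equations of bidegree $(1,1)$ in a variety of dimension $(n-1)+(x-1)$. Because $x\geq y-n+2$, we have $(n-1)+(x-1)-y\geq 0$, so $Z\neq\emptyset$; here we use that $k$ is algebraically closed, so the projective dimension theorem applies. Hence there is a $v\neq 0$ in $M_1$ whose images $M(\gamma_1)v,\dots,M(\gamma_n)v$ span a space of dimension $d\leq n-1$.

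Moreover $d\geq 1$. Otherwise $kv$ would be a submodule isomorphic to $S(1)$. Since $S(1)$ is injective it would split off, contradicting that $M$ is indecomposable and regular.

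Let $U$ be the submodule generated by $v$, so $\dimu U=(1,d)$. Then $U$ is a quotient of $P(1)$, hence local and indecomposable. Also $q(1,d)=1+d^2-nd<0$ for $1\leq d\leq n-1$, so $U$ is regular and in particular not preprojective.

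\textbf{Step 2 (the factor module).} By the criterion, $M/U$ must be preinjective, with $\dimu M/U=(x-1,y-d)$. To contradict this I would use the defect-type linear form distinguishing preinjective modules, namely $\langle -,\dimu M'\rangle$ tested against suitable regular $M'$, or equivalently the slope condition. Preinjective dimension vectors satisfy $b/a$ below the smaller root $\mu$ of $t^2-nt+1$ (with $\mu\approx 1/n$), up to the shifted simple vectors. Since $d\leq n-1$ and $y\geq x-n+2\geq 2$, the hope is that $(y-d)/(x-1)$ is too large for this.

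The freedom in Step 1 should help. The variety $Z$ has dimension at least $x+n-2-y$, so I would choose $v$ generically in $Z$ so that $d$ is exactly $n-1$ (or the minimal value forced). When that is not enough, I would enlarge $U$ by taking the submodule generated by a subspace $W\subseteq M_1$ of dimension $a$, with a similar dimension count bounding $\dim\sum_i M(\gamma_i)W$. The goal is to choose $(a,b)=\dimu U$ with both $(a,b)$ and $(x-a,y-b)$ strictly inside the regular cone, and then apply Lemma \ref{regular} directly.

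\textbf{Main obstacle.} The delicate step is this last one: ensuring the complementary vector $(x-a,y-b)$ is regular rather than preinjective, using exactly the inequalities $x\geq n$ and $x\leq y+n-2$. I would first try $a=x-n+1$, using the count $\dim\sum_i M(\gamma_i)W\leq$ (expected value) from a Grassmannian version of the incidence variety. I would then verify $q<0$ for both pieces by a direct quadratic estimate.
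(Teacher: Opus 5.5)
Your Step~1 is correct, but the proof stops exactly where it has to happen. Step~2 is only a hope, and in the form you set it up (one cyclic submodule $U=\Lambda_n.v$ plus a slope test on $\dimu M/U=(x-1,y-d)$) it cannot be completed.

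\begin{itemize}
\item The bound $d\le n-1$ does settle the case $y\ge n+1$: there $y-d\ge y-n+1\ge \frac{x-1}{n-1}>\mu(x-1)$.
\item For $y\le n$ it yields at best $y-d\ge 1$, and nothing at all when $y\le n-1$. Meanwhile $(x-1,y-d)$ is a preinjective dimension vector whenever $d=y$, or whenever $d=y-1$ and $x-1\ge n$.
\item Concretely, take $(x,y)=(n+1,3)$. The $(n+1)$-dimensional space of $n\times 3$ matrices $A_m=(\gamma_i.m)_i$ generically misses the rank $\le 1$ locus, which has codimension $2n-2\ge n+1$. So every cyclic submodule has dimension vector $(1,2)$ or $(1,3)$, and every quotient, with dimension vector $(n,1)$ or $(n,0)$, passes your test.
\item Choosing $v$ generic in $Z$ so that $d=n-1$ goes in the wrong direction anyway: you want $d$ small.
\item Your fallback with $a=x-n+1$ has the right size, but you supply neither the existence of $W$ nor the estimates. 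An ``expected value'' count is useless here, because the expected dimension of $\sum_i M(\gamma_i)W$ is $\min(y,na)=y$ in the relevant range. You need a \emph{special} $W$ whose image lies in a hyperplane of $M_2$.
\end{itemize}

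The missing idea is to dualize, and this is where $x\le y+n-2$ really enters.

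\begin{itemize}
\item For $\xi\in M_2^*$ let $B_\xi\colon M_1\to\Lambda_n^*$, $m\mapsto(\gamma\mapsto\xi(\gamma.m))$. This is an $n\times x$ matrix depending linearly on $\xi$.
\item For $x\ge n$ the matrices of rank $\le n-1$ form a cone of codimension $x-n+1$. Since $\dim_k M_2^*=y\ge x-n+2$, some $\xi\ne 0$ has $\rk B_\xi\le n-1$.
\item Put $W=\ker B_\xi$ and $U=(W,\sum_i M(\gamma_i)W)$. Then $\dimu U=(a,b)$ with $a\ge x-n+1\ge 1$ and $b\le y-1$, because the image lies in $\ker\xi$.
\item Using $y\le x$ and $x\ge n$, you get $b\le x-1\le (n-1)(x-n+1)\le (n-1)a$. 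Also $x-a\le n-1$ and $y-b\ge 1$.
\item These upper bounds alone do not give $q<0$, so a purely quadratic estimate will not close the argument. You also need $b>\mu a$ and $y-b<\mu^{-1}(x-a)$.
\item Both hold because a submodule of the regular module $M$ has no preinjective summand, and a factor module of $M$ has no preprojective summand. This is $\mathrm{Hom}(\mathcal I,\mathcal R)=0=\mathrm{Hom}(\mathcal R,\mathcal P)$, with $\mathcal I,\mathcal R,\mathcal P$ the preinjective, regular and preprojective modules.
\item Since $\mu<\frac{1}{n-1}$ and $n-1<\mu^{-1}$ for $n\ge 3$, both $(a,b)$ and $(x-a,y-b)$ lie strictly inside the regular cone (note $x-a\ge 1$). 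Lemma~\ref{regular} then gives the contradiction.
\end{itemize}
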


\begin{corollary}\label{coro}\cite[Corollary 3.11]{jie}
Let $(x,y)\in \mathbf{F}$ with $x<n$. Suppose that $(x,y)$ is elementary for $\cK_{n}$. Then $(x,y)$ is elementary for $\cK_{n+1}$.

\end{corollary}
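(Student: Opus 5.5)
The plan is to extend an elementary $\cK_n$-module $M$ with $\dimu M=(x,y)$ to a $\cK_{n+1}$-module $M'$ with the same dimension vector. We keep the first $n$ arrows and choose the new arrow arbitrarily: put $M'(\gamma_i)=M(\gamma_i)$ for $1\leq i\leq n$ and $M'(\gamma_{n+1})=M(\gamma_1)$. Any choice would do; the point is that every subrepresentation of $M'$ restricts to a subrepresentation of $M$. We then show that $M'$ is indecomposable, that it is regular, and finally that it is elementary.

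\emph{Indecomposability and regularity.} An endomorphism of $M'$ is an endomorphism of $M$ that also commutes with $M'(\gamma_{n+1})$. Hence $\mathrm{End}(M')$ is a subalgebra of the finite-dimensional local algebra $\mathrm{End}(M)$. A subalgebra of a local finite-dimensional algebra is again local: a non-nilpotent element of the subalgebra is invertible in the big algebra, and its inverse is a polynomial in it, so the inverse lies in the subalgebra. Thus $M'$ is indecomposable. For the $(n+1)$-Kronecker form we have $q_{n+1}(x,y)=q_n(x,y)-xy<0$. An indecomposable module whose dimension vector satisfies $q<0$ is neither preprojective nor preinjective, because those have real roots ($q=1$) as dimension vectors. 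So $M'$ is regular.

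\emph{Elementarity.} This is the main step, and it rests on a numerical observation that uses $x<n$. We use the characterisation quoted in the preliminaries \cite[Proposition 3.2]{Daniel2}, in the form: a regular module is elementary if and only if every proper nonzero submodule is preprojective.
\begin{enumerate}
\item[(1)] The indecomposable preprojective $\cK_n$-modules have dimension vectors $(0,1)$, $(1,n)$, $(n,n^2-1),\dots$. Let $U\subsetneq M$ be a nonzero submodule. Then $\dimu U=(a,b)$ with $a\leq x<n$ and $b\leq y<n$. So every indecomposable summand of $U$ must have dimension vector $(0,1)$, and $U\cong S(2)^b$ lies entirely at the sink.
\item[(2)] Now let $U'\subsetneq M'$ be a nonzero submodule. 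Its restriction is a proper nonzero $\cK_n$-submodule of $M$, so by (1) $U'$ is concentrated at the sink. Hence, as a $\cK_{n+1}$-module, $U'\cong S(2)^b=P(2)^b$, which is preprojective.
\item[(3)] By the same characterisation applied to $\cK_{n+1}$, $M'$ is elementary.
\end{enumerate}
Alternatively, step (3) can be done directly. Suppose there were an exact sequence $0\to L\to M'\to N\to 0$ with $L,N$ regular and nonzero. Then by (2) $L$ is preprojective, which is a contradiction.

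The step I expect to need the most care is the precise form of the criterion from \cite{Daniel2}. The direction actually needed is easy and can be verified directly, as in the alternative to (3): if every proper nonzero submodule is preprojective, no regular $L$ can occur. The converse, that elementary implies every proper submodule is preprojective, is used for $M$ in (1). If that converse is only available in a dual "factor is preinjective" form, I would instead argue with Lemma \ref{regular}. Suppose $M$ had a submodule $U$ with $\dimu U=(a,b)$ and $a>0$. Since the only preprojective vectors with $a<n$ are multiples of $(0,1)$, $U$ would have a non-preprojective summand. I would then show, using $x<n$ together with the fact that the possible preinjective quotients of $M$ have dimension vector $(x-a,0)$, that both $\dimu U$ and $\dimu M/U$ can be taken regular, contradicting elementarity of $M$.
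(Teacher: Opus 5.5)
\textbf{Gap: step (1) is false, and everything after it depends on it.}

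The criterion you use is not the one quoted in the preliminaries. There, a regular $M$ is elementary iff for each proper nonzero submodule $U$, \emph{either} $U$ is preprojective \emph{or} $M/U$ is preinjective. You dropped the second alternative, and it is exactly the one that occurs here.

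For $0\neq m\in M_1$, Lemma~\ref{n-1} gives $\dimu \Lambda_n.m=(1,y)$. This module is generated by one element, so it is local and hence indecomposable. Since $q(1,y)=1+y(y-n)<0$, it is a regular submodule of $M$, proper as soon as $x\geq 2$, with quotient $S(1)^{x-1}$.

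A concrete instance: take $n=4$ and the module $k^2\to k$ with $\gamma_1,\gamma_2$ the two coordinate functionals and $\gamma_3=\gamma_4=0$. It has dimension vector $(2,1)\in\mathbf{F}$ and is elementary. Yet it contains the regular submodule $\Lambda_4.(1,0)$ of dimension vector $(1,1)$.

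So elementary modules in this range do not have all proper submodules at the sink. Your numerical observation only controls the preprojective alternative. The fallback in your last paragraph aims at the same false statement: the case $\dimu M/U=(x-a,0)$ cannot be ruled out, since it is the typical one.

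\textbf{What you actually need.} The key fact is Lemma~\ref{n-1}: every $0\neq m\in M_1$ satisfies $\Lambda_n.m\supseteq M_2$. Your Lemma~\ref{regular} idea proves this when aimed correctly.
\begin{enumerate}
\item[(i)] Suppose $\dimu\Lambda_n.m=(1,b)$ with $b<y$. Then $b\geq 1$, since an injective submodule $S(1)$ would split off.
\item[(ii)] Enlarge $\Lambda_n.m$ at the sink to a submodule $U$ with $\dimu U=(1,y-1)$, so $\dimu M/U=(x-1,1)$.
\item[(iii)] Both vectors are regular, because $q(1,t)=q(t,1)=1+t(t-n)<0$ for $1\leq t\leq n-1$, and $1\leq y-1\leq x-1\leq n-1$. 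Lemma~\ref{regular} then contradicts elementarity.
\end{enumerate}

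With this fact, your extension works for any choice of $M'(\gamma_{n+1})$. That includes the paper's embedding $\iota$, where $\gamma_{n+1}$ acts as zero. In all cases $\Lambda_{n+1}.m\supseteq\Lambda_n.m\supseteq M_2$, so you can finish in either of two ways:
\begin{enumerate}
\item[(a)] Apply Lemma~\ref{n-1} over $\cK_{n+1}$; this is legitimate since $x<n+1$ and $(x,y)\in\mathbf{F}'$.
\item[(b)] Argue directly. A nonzero regular $L\subseteq M'$ is not concentrated at the sink, so $L_1\neq 0$. Hence $L_2=M_2$, and $M'/L$ is a sum of copies of $S(1)$, so it is preinjective or zero.
\end{enumerate}

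Your indecomposability and regularity arguments for $M'$ are fine.
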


For the dimension of the linear subspace of fixed rank $\cH$ we have the following

\begin{Theorem} \cite[Theorem]{West}\label{wt}
Let $2\leq r\leq j\leq i$ be integers. Then 

\begin{center}
$i-r+1\leq l(r,j,i)\leq i+j-2r+1$.
\end{center}

\end{Theorem}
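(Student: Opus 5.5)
The lower and upper bounds need different tools: an explicit construction for $i-r+1\leq l(r,j,i)$, and a characteristic-class argument over the algebraically closed field $k$ for $l(r,j,i)\leq i+j-2r+1$. Since the paper only cites this result from \cite{West}, what follows is a sketch of how I would reprove it.

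\emph{Lower bound.} Identify $V^r$, $V^i$ with the spaces $k[t]_{<r}$, $k[t]_{<i}$ of polynomials of degree $<r$, resp.\ $<i$. Fix a surjection $\pi:V^j\to V^r$; it exists since $r\leq j$. For each $g\in k[t]_{\leq i-r}$ put $v_g(u)=g\cdot\pi(u)$, where $g\cdot\pi(u)$ lies in $k[t]_{<i}$ because $\deg g\leq i-r$. Then $\cH=\{v_g\}$ is a linear subspace of $L(V^j,V^i)$ of dimension $i-r+1$. For $g\neq 0$, multiplication by $g$ is injective on $k[t]$, since $k[t]$ is a domain. Hence $\rk v_g=\rk \pi=r$, which gives $l(r,j,i)\geq i-r+1$.

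\emph{Upper bound.} Let $\cH$ be a subspace of dimension $l$ whose nonzero elements all have rank $r$, and put $P=\mathbb{P}(\cH)\cong\mathbb{P}^{l-1}$ with hyperplane class $h$. The tautological map $\mathcal{O}^j\to\mathcal{O}(1)^i$, sending $(v,u)\mapsto v(u)$, has constant rank $r$. Its kernel $K$, its image $Q$ and its cokernel $C$ are therefore vector bundles of ranks $j-r$, $r$ and $i-r$, and they fit into
\begin{center}
$0\to K\to\mathcal{O}^j\to Q\to 0,\qquad 0\to Q\to \mathcal{O}(1)^i\to C\to 0.$
\end{center}
From these sequences I read off the Chern class relations in $\mathbb{Z}[h]/(h^l)$:
\begin{center}
$c(K)c(Q)=1,\qquad c(Q)c(C)=(1+h)^i.$
\end{center}
I then use the rank bounds $c_m(Q)=0$ for $m>r$, $c_m(K)=0$ for $m>j-r$ and $c_m(C)=0$ for $m>i-r$.

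\emph{Deriving the bound.} Eliminating $Q$ gives $c(C)=(1+h)^i\,c(K)$ with $c(K)=a(h)$ a polynomial of degree $\leq j-r$. Dually, $Q^*(1)$ is a quotient of $\mathcal{O}^i$ and $Q(-1)$ is a subbundle of $\mathcal{O}^i$, which constrains $c(Q(-1))$ and its inverse in the same way. I would reduce everything to a single identity
\begin{center}
$(1+h)^{i}\,a(h)\equiv b(h) \pmod{h^l},$
\end{center}
with $\deg a\leq j-r$ and $\deg b\leq i-r$. I would then show that if $l-1>i+j-2r$, this identity forces a nonzero coefficient in a degree where some Chern class must vanish, for example via the top Segre class of $Q$ twisted by $\mathcal{O}(-1)$. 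This contradiction yields $l\leq i+j-2r+1$.

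The main obstacle is this last combinatorial step: choosing the right twist, so that both the sub-bundle and the quotient-bundle degree constraints apply, and checking that the relevant coefficient, a binomial-type number, is nonzero. If the characteristic of $k$ could kill that coefficient, I would work with Chow rings, where the needed class is forced nonzero by positivity of globally generated bundles. Alternatively I would pass to a field of characteristic zero by a specialization argument.
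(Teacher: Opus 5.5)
Your lower bound is correct; it is the polynomial-multiplication form of the paper's Toeplitz matrices $A_a$, precomposed with a surjection $V^j\to V^r$. The upper-bound set-up is also sound: constant rank on the reduced scheme $\mathbb{P}^{l-1}$ makes $K,Q,C$ locally free. The two Whitney relations then give $c(C)=(1+h)^i c(K)$ directly, i.e.
\[
(1+h)^i a(h)\equiv b(h) \pmod{h^l},\qquad a(0)=1,\ \deg a\le j-r,\ \deg b\le i-r,
\]
with no twisting needed. The gap is that the whole content of the upper bound is the claim that this congruence is impossible for $l\ge i+j-2r+2$. That is exactly the step you leave open as the ``main obstacle'', with only a vague pointer to a Segre class of $Q(-1)$. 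It is not routine. Comparing degrees gives only $l\le i+j-r$: for $l>i+\deg a$ the congruence is an equality of polynomials, impossible since $\deg b<i$. This misses the target by $r-1$. In linear-algebra terms, what must be nonzero is not a single binomial coefficient but the determinant $\det\bigl(\binom{i}{i-r+1+s-p}\bigr)_{0\le s,p\le j-r}$ of the system that $a_0=1,a_1,\dots,a_{j-r}$ would have to satisfy. It is a positive integer (e.g.\ by Lindstr\"om--Gessel--Viennot), but that has to be proved or cited.

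Here is a short way to close the step. Since $a,b$ have integer coefficients, work in $\mathbb{Q}[h]$ and put $\beta=i-r$, $D=d/dh$. Then $D^{\beta+1}b=0$, while by Leibniz $D^{\beta+1}\bigl[(1+h)^i a\bigr]=(1+h)^{r-1}\tilde a$ with $\deg\tilde a\le j-r$. Because $(1+h)^{r-1}$ is a unit in $\mathbb{Q}[[h]]$, the congruence gives $\tilde a\equiv 0\pmod{h^{\,l-\beta-1}}$. Since $l-\beta-1\ge j-r+1$, this forces $\tilde a=0$. Hence $\deg\bigl((1+h)^i a\bigr)\le i-r<i$, which is absurd as $a\neq 0$.

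This also settles your worry about the characteristic. Over any field $A^*(\mathbb{P}^{l-1})=\mathbb{Z}[h]/(h^l)$, so every number in the argument is an integer and the contradiction takes place in $\mathbb{Z}$; no positivity of globally generated bundles is involved. You should drop the alternative of specializing to characteristic $0$: lifting a constant-rank space from characteristic $p$ is itself a nontrivial problem.
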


Let $A$ be a matrix over $k$. We use $A^t$ to denote its transpose. When $r=j$,  we consider the subspace $\cH$  with $\dim_k\cH=l(j,j,i)=i-j+1$ (cf. \cite[Corollary I]{john}).  Given the matrix 

\[  A_a=\begin{bmatrix}
a_1 &a_2  & a_3&   \cdots & a_{i-j+1} & 0&  0 & \cdots & 0\\
0 & a_1 & a_2 & \cdots & a_{i-j} & a_{i-j+1} & 0 &\cdots & 0\\
0& 0& a_1 & \cdots & a_{i-j-1} & a_{i-j} & a_{i-j+1} & \cdots & 0\\
\vdots & \vdots & \vdots & \ddots & \ddots & \vdots & \vdots & \ddots & \vdots\\
0 & 0 & 0  & \cdots & a_1 & a_2 & a_3 & \cdots & a_{i-j+1} 
\end{bmatrix}^t_{j\times i}, \]
it can be seen that such matrices could form a basis of $\cH$, where $a=(a_1,\cdots, a_{i-j+1})\in k^{i-j+1}$. Normally, the  subspace $\cH$ with $\dim_k \cH=i-j+1$ is not unique. For example, we can let 
 \[ C_a=\begin{bmatrix}
 0 & \cdots & 0  & 0 & a_{i-j+1} & \cdots & a_3 & a_2 & a_1\\
  0 & \cdots & 0   & a_{i-j+1}  & a_{i-j} & \cdots & a_2 & a_1 & 0\\
0 & \cdots &  a_{i-j+1} &  a_{i-j}& a_{i-j-1} &\cdots  &a_1 & 0  & 0\\
\vdots & \revddots&\revddots & \revddots& \revddots &\revddots & \vdots & \vdots & \vdots\\
a_{i-j+1} & \cdots & a_3 & a_2 & a_1 & \cdots & 0& \cdots & 0
\end{bmatrix}^t_{j\times i}, \]
and  all such matrices $C_a$ also consist of a basis of some largest subspace $\cH'$. Clearly, matrices $A_a$ and $C_a$ are linearly independent in general.

Let $\Lambda_n$ be the  space spanned by the arrows of $K(n)$. Then  it is an $n$-dimensional vector space with basis $\{\gamma_i\mid 1\leq i\leq n\}$. Let $M\in\modd \cK_n$, and let $m\in M$.  We define $\gamma_i.m:=M(\gamma_i)(m)$,  and let $\Lambda_n.m$ be the  submodule  of $M$ generated by the element $m$. Let $(x,y)\in \mathbf{F}$.  In \cite{jie}, we show that if $(x,y)$ is elementary, then $x<2n$ and $y<2(n-1)$ \cite[Lemma 3.3, 3.4]{jie}. Moreover, we have the following

\begin{Lemma}\label{2n} \cite[Lemma 3.8]{jie}
Suppose that  $(x,y)\in \mathbf{F}$ is an elementary dimension vector, where $n < x<2n$. Then 
 $y^2-y+2x\leq 4n+2$.

\end{Lemma}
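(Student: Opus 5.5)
The plan is to argue by contradiction: assume $y^2-y+2x>4n+2$ and produce a submodule $U\subseteq M$ of an elementary module $M$ with $\dimu M=(x,y)$ such that both $\dimu U$ and $\dimu M/U$ are regular. Lemma \ref{regular} then says $M$ is not elementary, which is the contradiction. Since $M$ is regular and $(x,y)\in\mathbf{F}$, I may also assume $M$ is indecomposable with $y\le x$ and $y\geq \frac{2}{n}x>2$.

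The submodules will come from an incidence-variety count. For integers $0\le a\le x$ and $0\le b\le y$, let
\[
\mathcal{V}_{a,b}=\{(U_1,U_2)\in \mathrm{Gr}(a,M_1)\times \mathrm{Gr}(b,M_2)\mid M(\gamma_i)(U_1)\subseteq U_2 \text{ for } 1\le i\le n\}.
\]
Every point of $\mathcal{V}_{a,b}$ is a submodule $U$ with $\dimu U=(a,b)$. The set $\mathcal{V}_{a,b}$ is the zero locus of a section of the bundle $\mathrm{Hom}(\mathcal{U}_1,M_2/\mathcal{U}_2)^{\oplus n}$ over $\mathrm{Gr}(a,M_1)\times\mathrm{Gr}(b,M_2)$. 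The key step is to show that $\mathcal{V}_{a,b}\neq\emptyset$ whenever
\[
a(x-a)+b(y-b)-na(y-b)\ \ge\ 0.
\]
For this I would use that the top Chern class of this bundle is a nonzero class on the product of Grassmannians: it is a positive combination of Schubert classes, since the bundle is globally generated. A zero locus of a section of a globally generated bundle whose top Chern class is nonzero cannot be empty, because $k$ is algebraically closed.

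Next I choose the parameters. I would take the quotient to be small in the second coordinate, namely $b=y-2$ and $a=1$, so the submodule is generated by one vector. Then $\dimu M/U=(x-1,2)$, which is regular because $q(x-1,2)=(x-1)^2+4-2n(x-1)<0$ for $2\le x-1<2n$. For $\dimu U=(1,y-2)$, regularity means $1+(y-2)^2-n(y-2)<0$, which holds since $3\le y<2n$. If this choice does not produce the stated bound, I would instead take $a=2$ and $b=y-2$, with submodule $(2,y-2)$ and quotient $(x-2,2)$, both regular in the range $n<x<2n$.

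With $a=2$, $b=y-2$ the existence condition reads $2(x-2)+2(y-2)-4n\ge 0$. That is not yet the required inequality, so the honest count must involve the rank freedom of $\Lambda_n.m$ rather than crude Grassmannian dimensions. The refined idea is this. For a line $m\in\mathbb{P}(M_1)$, let $r(m)=\dim\Lambda_n.m$. The locus of lines with $r(m)\le y-2$ is a determinantal locus of $n\times y$ matrices of rank $\le y-2$ in the linear family $m\mapsto(\gamma_i.m)_i$, which has codimension at most $2(n-y+2)$. It is therefore nonempty if $x-1\ge 2(n-y+2)$. Combining this with the second vector of a $2$-dimensional $U_1$, whose images must also land in the same $(y-2)$-dimensional space, adds the quadratic term. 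Optimizing over $a$ and over the codimension of $U_2$, I expect the inequality $y^2-y+2x\le 4n+2$ to drop out as exactly the boundary of nonemptiness.

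The main obstacle is this final computation. I must pin down the right pair $(a,b)$ and verify that the expected-dimension count, including the quadratic $y^2-y$ from the rank condition, becomes nonnegative precisely when $y^2-y+2x>4n+2$. I must also check that the chosen $\dimu U$ and $\dimu M/U$ are regular throughout the range $n<x<2n$, $\frac2n x\le y\le x$. I would first try the degeneracy-locus route with $U_1$ of dimension $2$ and $U_2$ of codimension $2$, and fall back on case checks at small $y$ where needed.
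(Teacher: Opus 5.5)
The paper gives no proof of this statement; it only quotes it from the author's earlier paper (Lemma~3.8 there). Your proposal does not prove it either. The decisive step is missing, and it cannot be supplied.

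\textbf{What is missing or wrong.} You concede that $(a,b)=(2,y-2)$ only yields the condition $x+y\ge 2n+4$, and that the ``quadratic term'' is still to be found.

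Your general non-emptiness criterion is also not valid. Global generation of $\mathrm{Hom}(\mathcal{U}_1,M_2/\mathcal{U}_2)^{\oplus n}$ only makes its top Chern class a nonnegative combination of Schubert classes, and that combination may be zero. For example, take $n=3$, $\dimu M=(15,5)$ and $(a,b)=(4,1)$. The expected dimension is $44+4-48=0$. But for general $M$ the map $M_1\to M_2^{\oplus 3}$, $m\mapsto(\gamma_i.m)_i$, is bijective, so no $4$-dimensional $U_1$ is mapped into $U_2^{\oplus 3}$ with $\dim U_2=1$.

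\textbf{What does work.} Your determinantal count for $(a,b)=(1,y-2)$ is sound, though $(1,y-2)$ is regular only for $3\le y\le n+1$, not for all $y<2n$. It shows that an elementary $M$ satisfies $x-1<2(n-y+2)$, i.e.\ $2x+4y\le 4n+8$. Since $(4n+8-2x-4y)-(4n+2-2x-y^2+y)=(y-2)(y-3)$, this is exactly the claimed bound for $y\le 3$ and strictly weaker for $y\ge 4$. No quadratic term can appear in counts of this kind: for a fixed codimension $c=y-b$ of $U_2$, the expected dimension $a(x-a)+c(y-c)-nac$ is linear in $x$ and $y$.

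\textbf{Why no refinement can close the gap.} For $y\ge 4$ the inequality appears to be false.
\begin{enumerate}
\item[(1)] Take $n=7$ and $d=(10,4)$. Then $d\in\mathbf{F}$, $7<10<14$, and $y^2-y+2x=32>30=4n+2$.
\item[(2)] The vectors $e=(a,b)$ with both $e$ and $d-e$ regular are $(a,1)$ with $1\le a\le 6$, $(a,2)$ with $1\le a\le 9$, and $(a,3)$ with $4\le a\le 9$.
\item[(3)] For these, $\langle e,d-e\rangle$ equals $-a^2-11a+3$, $-a^2-4a+4$ and $-a^2+3a+3$ respectively. This is always negative; the extreme cases $(1,2)$ and $(4,3)$ give $-1$.
\item[(4)] By the incidence-variety count you set up, the modules in the representation space $k^{nxy}$ having a submodule of dimension vector $e$ form a closed subset of dimension at most $nxy+\langle e,d-e\rangle<nxy$. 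So a general $M$ of dimension vector $(10,4)$ has none of these submodules.
\item[(5)] Such an $M$ is indecomposable: $(10,4)$ lies in Kac's fundamental set, since $2\cdot 10-7\cdot 4<0$ and $2\cdot 4-7\cdot 10<0$, and hence is a Schur root. It is regular since $q(10,4)<0$.
\item[(6)] Regular modules have regular dimension vectors, so $M$ admits no short exact sequence with nonzero regular end terms. Hence $M$ is elementary.
\end{enumerate}

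So, as quoted, the statement needs a corrected right-hand side or an extra hypothesis. One candidate is the linear bound $x+2y\le 2n+4$, which your argument does give for $y\le n+1$.
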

In particular, Lemma \ref{2n} has an easy corollary comparing to the proof of \cite[Proposition 3.4]{Daniel2}.

\begin{corollary}\label{y}
Suppose that  $(x,y)\in \mathbf{F}$ is an elementary dimension vector. Then $y< n$.

\end{corollary}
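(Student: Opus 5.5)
We need to prove Corollary: if $(x,y)\in\mathbf{F}$ is elementary then $y<n$. The inputs are Lemma \ref{2n} (the case $n<x<2n$), the earlier bound $x<2n$ from \cite{jie}, and Lemma \ref{less n}. The plan is to split into cases according to the size of $x$ relative to $n$.

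\emph{Case $x<n$.} Since $(x,y)\in\mathbf{F}$ we have $y\le x<n$, so there is nothing to prove.

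\emph{Case $x=n$.} We want to show that $(n,y)$ with $\frac{2}{n}n=2\le y\le n$ is not elementary when $y=n$. More generally we want $y<n$. If $y\le n-1$ we are done, so suppose $y=n=x$. Then $y\le x\le y+n-2$ holds, because $n\ge 3$. Lemma \ref{less n} then forces $x<n$, which is a contradiction. In fact Lemma \ref{less n} applies whenever $x\le y+n-2$, and since $y\ge 2$ whenever $x\ge n$, this already covers $x=n$ for every $y$ with $2\le y\le n$. So this case yields $y<n$ (indeed it shows no elementary vector has $x=n$).

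\emph{Case $n<x<2n$.} This is the only other possibility, because \cite[Lemma 3.6]{jie} gives $x<2n$. Lemma \ref{2n} gives $y^2-y+2x\le 4n+2$. Suppose for contradiction that $y\ge n$. Then $y^2-y\ge n^2-n$ and $2x\ge 2n+2$, so
\[
n^2-n+2n+2\le 4n+2, \quad\text{i.e.}\quad n^2-3n\le 0 .
\]
This forces $n\le 3$, so the only remaining possibility is $n=3$, $y=3$, $x\in\{4,5\}$. There the inequality reads $6+2x\le 14$, so $x=4$, and we must treat $(4,3)$ for $\cK_3$ separately. Since $y=3\le 4\le y+n-2=4$, Lemma \ref{less n} applies and gives $x<3$, a contradiction. (Alternatively one can appeal to Ringel's classification for $K(3)$ \cite{Claus2}.)

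The only delicate point is this boundary configuration $n=3$, where the crude inequality from Lemma \ref{2n} is not strong enough on its own. Lemma \ref{less n} handles it, and the argument is complete.
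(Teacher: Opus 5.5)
Your proof is correct and follows the paper's route. You handle $x<n$ trivially, exclude $x=n$ by Lemma \ref{less n} (using $y\ge 2$ in $\mathbf{F}$) and $x\ge 2n$ by the cited bound, and in the range $n<x<2n$ you feed $y\ge n$ into Lemma \ref{2n}; your contradiction $n^2-3n\le 0$ is the same computation as the paper's comparison $\sqrt{8n+1}\le 2n-1$, i.e.\ $4n(n-3)\ge 0$. The only difference is the boundary case $n=3$: the paper appeals to Ringel's classification for $K(3)$, while you pin the leftover case down to $(4,3)$ and dispose of it with Lemma \ref{less n} (since $3\le 4\le 3+3-2$), which keeps the argument within the lemmas already quoted.
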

\begin{proof}
  According to Lemma \ref{less n} and  \cite[Lemma 3.6]{jie},  we only need to focus on the case: $n<x<2n$. 
By $y^2-y+2x\leq 4n+2$, we have $y^2-y \leq 4n+2-2x \leq 4n+2-2(n+1)=2n$. Then $(y-\frac{1}{2})^2\leq 2n+\frac{1}{4}$, that is, $y\leq \frac{1+\sqrt{8n+1}}{2}$. However,  $\sqrt{8n+1}\leq 2n-1$, that is, $4n^2-12n=4n(n-3) \geq 0$ since we have $n\geq 3$. On the other hand, Claus Micheal Ringel tells us that $y<3$ when $n=3$ \cite[Section 1. Theorem]{Claus2}. Hence we always have $y<n$.

\end{proof}

When the dimension vector $(x,y)\in \mathbf{F}$ with $x<n$, we have 

\begin{Lemma}\cite[Lemma 3.10]{jie}\label{n-1}
Let $M\in\modd \mathcal{K}_n$ be a module with $\dimu M=(x,y)\in \mathbf{F}$, and let $0\neq m\in M_1$ be an arbitrary element, $x<n$.   Then $M$ is elementary if and only if the submodule $U$ generated by $m$ has dimension vector $(1,y)$.

\end{Lemma}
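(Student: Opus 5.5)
We prove the two implications separately. The direction ``$\Lambda_n.m$ has dimension vector $(1,y)$ for every $0\neq m\in M_1$ $\Rightarrow$ $M$ elementary'' follows directly from the definition. For the converse we use the criterion recalled in the preliminaries, \cite[Proposition 3.2]{Daniel2}, which we read as: a regular module $M$ is elementary if and only if every proper non-zero submodule $U\subsetneq M$ is preprojective, equivalently every proper non-zero factor is preinjective. We also use the standard facts that $S(1)$ is simple injective and that an indecomposable regular $\cK_n$-module has both components of its dimension vector positive.

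\emph{($\Leftarrow$).} Suppose $\dimu \Lambda_n.m=(1,y)$ for every $0\neq m\in M_1$, and suppose there is an exact sequence $(0)\to L\to M\to N\to(0)$ with $L,N$ non-zero regular. Every indecomposable summand of $L$ is regular, so $L_1\neq 0$. Pick $0\neq m\in L_1$. Then $\Lambda_n.m\subseteq L$, so $\dim_k L_2\geq y$, and hence $L_2=M_2$. Therefore $\dimu N=(x-\dim_k L_1,0)$ with $x-\dim_k L_1>0$, so $N$ is a direct sum of copies of $S(1)$. Since $S(1)$ is preinjective, $N$ is not regular, a contradiction. Hence $M$ is elementary. (Regularity of $M$ itself is part of the hypothesis ``elementary'' on the other side, and we take $M$ regular throughout.)

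\emph{($\Rightarrow$).} Let $M$ be elementary and $0\neq m\in M_1$. Put $U=\Lambda_n.m$, so $\dimu U=(1,b)$ with $0\leq b\leq y$. Suppose $b<y$. Then $U$ is a proper non-zero submodule of $M$, and we show it is not preprojective.

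First, $U$ is indecomposable: it is generated by the single element $m$, so its top is the simple module $S(1)$, and a module with simple top is local.

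Next, we rule out every possible value of $b$. The indecomposable preprojective dimension vectors $(0,1),(1,n),(n,n^2-1),\dots$ are obtained by applying $\sigma^-$ repeatedly, and the only one with first coordinate $1$ is $(1,n)$. Since $b<y\leq x<n$, we have $b\neq n$. So $U$ is not preprojective. (Concretely: for $b=0$ it is $S(1)$, which is injective; for $1\leq b\leq n-1$ one has $q(1,b)=1+b^2-nb<0$, so $(1,b)$ is regular.)

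This contradicts the criterion above, so $b=y$, that is, $\dimu \Lambda_n.m=(1,y)$.

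The main obstacle is the use of \cite[Proposition 3.2]{Daniel2} in the form ``all proper submodules are preprojective'', together with the identification of preprojective dimension vectors with first coordinate $1$. If one prefers to avoid that criterion, the alternative is to apply Lemma~\ref{regular} to $U$ and $M/U$. For $1\leq b\leq n-1$ the vector $(1,b)$ is regular, so this reduces to showing that $\dimu M/U=(x-1,y-b)$ is regular, i.e.\ $q(x-1,y-b)<0$, using $(x,y)\in\mathbf{F}$ and $x<n$. The boundary cases $b=0$ (split off $S(1)$, contradicting indecomposability) and $y-b$ small need separate treatment. I would try the criterion route first, since it avoids this case analysis.
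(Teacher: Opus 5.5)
Your ($\Leftarrow$) argument is fine, but the ($\Rightarrow$) direction rests on a misreading of the criterion. What the preliminaries recall is this: a regular $M$ is elementary if and only if, for every proper non-zero submodule $U$, \emph{either} $U$ is preprojective \emph{or} $M/U$ is preinjective. These are two alternatives, not two equivalent formulations.

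Your version, ``every proper non-zero submodule is preprojective'', is false, and it fails precisely for the modules in this lemma. If $2\le x<n$ and $M$ is elementary, the lemma itself says $\Lambda_n.m$ has dimension vector $(1,y)$. By your own reasoning (it is local, has first coordinate $1$, and $y<n$), this is an indecomposable proper submodule that is not preprojective; in fact $q(1,y)<0$, so it is regular. For instance, $X(2,2;3)$ over $\cK_3$ is elementary (Lemma~\ref{theorem}; every $0\neq m$ generates a submodule of dimension vector $(1,2)$), yet $\Lambda_3.e_1$ is regular with $q(1,2)=-1$. So your argument proves too much: run with $b=y$ instead of $b<y$, it would exclude every elementary module with $x\ge 2$. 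Showing that $U$ is not preprojective is only half of what the criterion requires.

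The missing half is that $M/U$ is not preinjective, and this is easy. We have $\dimu M/U=(x-1,y-b)$ with $y-b\ge 1$. The indecomposable preinjectives have dimension vectors $(1,0),(n,1),(n^2-1,n),\dots$, so a preinjective module with non-zero second coordinate has first coordinate at least $n>x-1$.

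Your fallback through Lemma~\ref{regular} also works without real case analysis. The case $b=0$ is impossible, because $S(1)$ is injective and would split off from the indecomposable $M$. For $b\ge 1$ one has $x\ge y\ge 2$. Put $a=x-1\le n-2$ and $c=y-b$, so $1\le c\le a$. Then $q(a,c)\le a^2+ac-nac=a\bigl(a-(n-1)c\bigr)<0$, while $q(1,b)=1+b(b-n)<0$.

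One smaller point: in ($\Leftarrow$) you assume $M$ is regular. The lemma is stated for any module of that dimension vector, and it is later applied to $M_{\cH}$, whose indecomposability is not known beforehand. Your submodule argument supplies it. Suppose $M=A\oplus B$ nontrivially with, say, $A_1\neq 0$. Then $A_2=M_2$, so $B\cong S(1)^r$ with $r\ge 1$, and any $0\neq m\in B_1$ has $\dimu\Lambda_n.m=(1,0)$, a contradiction. Finally, an indecomposable module with $q(x,y)<0$ (which holds on $\mathbf{F}$ for $n\ge 3$) is regular.
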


Let $ M \in  \modd\cK_n $ be a module. We introduce an arrow $ \gamma_{n+1} $ different from $ \gamma_i, 1 \leq i \leq n $, and we define $ \gamma_{n+1}.m = 0 $ for any $ m \in M $. Then $ \Lambda_{n+1} = \Lambda_n \oplus k \gamma_{n+1} $, and it induces naturally an embedding $ \iota : \modd\mathcal{K}_n \to \modd \mathcal{K}_{n+1} $. Hence the module $ M $ can be seen as an object located in $ \modd \mathcal{K}_{n+1} $. In the following, we always mean $ \gamma_{n+1}.M = 0 $ when we have $ M \in \modd \mathcal{K}_n \subseteq \modd  \mathcal{K}_{n+1} $. Let $ (x,y) \in \mathbf{F} $ with $ x < n $. We have $ \frac{2}{n} x \leq y \leq x $. Hence $ \frac{2}{n+1} x < \frac{2}{n} x \leq y \leq x $. Then $ (x,y) $ is located in the fundamental domain $ \mathbf{F}' $ of $ \mathcal{K}_{n+1} $, where $ \mathbf{F}' = \{(x,y) \mid \frac{2}{n+1} x \leq y \leq x\} $.

 Given  $(x,y)\in \mathbf{F}$ with $n<x<2n$,  we have

\begin{proposition}\label{N}
Let $(x,y)\in \mathbf{F}$, and let $M\in\modd \cK_n$ be a module with dimension vector $(x,y)$, where $n<x<2n$. Suppose that $M$ is an elementary module.  Then  there exist two elements  $ m', m''\in M_1$ such that  $\dimu \Lambda_n.m'=(1,y-1)$ and $\dimu \Lambda_n.m''=(1,y)$.
\end{proposition}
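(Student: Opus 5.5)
The plan is to study, for $0\neq m\in M_1$, the integer $r(m)=\dim_k \Lambda_n.m_2$, where $\Lambda_n.m_2=\{\gamma.m\mid\gamma\in\Lambda_n\}\subseteq M_2$. Then $\dimu\Lambda_n.m=(1,r(m))$ and $0\leq r(m)\leq y$. We use three facts stated earlier: Corollary \ref{y} ($y<n$), the definition of $\mathbf{F}$ (which gives $y\geq \frac{2}{n}x>2$, so $y\geq 3$), and Lemma \ref{regular}. First we pin down the possible values of $r(m)$, then we produce elements realizing each of the two extreme values.

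\emph{Step 1: $r(m)\in\{y-1,y\}$ for every $0\neq m\in M_1$.}
\begin{itemize}
\item If $r(m)=0$, then $km\cong S(1)$ is an injective submodule, hence a direct summand of $M$. This contradicts the fact that $M$ is indecomposable and regular.
\item Suppose $2\leq r:=r(m)\leq y-2$, and put $s=y-r\geq 2$. Then $q(1,r)=1+r^2-nr<0$, since $2\leq r\leq n-2$. The quotient $M/\Lambda_n.m$ has dimension vector $(x-1,s)$, with $n\leq x-1\leq 2n-2$ and $2\leq s<n$. Its form is
\[q(x-1,s)=(x-1)(x-1-ns)+s^2.\]
Since $x-1-ns\leq 2n-2-ns\leq -2$, we get $q(x-1,s)\leq -2n+s^2-\ldots$; a short estimate using $x-1\geq n$ and $s<n$ shows $q(x-1,s)<0$. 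So both $\dimu\Lambda_n.m$ and $\dimu M/\Lambda_n.m$ are regular, and Lemma \ref{regular} contradicts elementarity.
\item If $r(m)=1$, then $(1,1)$ is regular because $q(1,1)=2-n<0$. Also $(x-1,y-1)$ is regular, because $y-1\geq 2$ and $x-1\geq n$. Lemma \ref{regular} again gives a contradiction.
\end{itemize}
Hence $r(m)\geq y-1$ for all $m\neq 0$.

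\emph{Step 2: existence of $m'$ with $r(m')=y-1$.} Fix a nonzero linear form $f$ on $M_2$. Consider the bilinear pairing $\Lambda_n\times M_1\to k$, $(\gamma,m)\mapsto f(\gamma.m)$. It corresponds to a linear map $M_1\to\Lambda_n^*$ from an $x$-dimensional space to an $n$-dimensional one. Since $x>n$, this map has a nonzero kernel element $m'$. For this $m'$ we have $\Lambda_n.m'\subseteq\ker f$, so $r(m')\leq y-1$. By Step 1, $r(m')=y-1$, that is, $\dimu\Lambda_n.m'=(1,y-1)$.

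\emph{Step 3: existence of $m''$ with $r(m'')=y$.} This is the step I expect to be the main obstacle. Argue by contradiction and assume $r(m)=y-1$ for every $m\neq 0$. Then each $m$ determines a hyperplane $W(m)=\Lambda_n.m_2$ of $M_2$. For a hyperplane $W\subseteq M_2$, set
\[K_W=\{m\in M_1\mid \Lambda_n.m\subseteq W\}.\]
This is a linear subspace, and Step 2 shows $d_W:=\dim K_W\geq x-n\geq 1$. The submodule $U_W=K_W\oplus W'$, with $W'=\Lambda_n.K_W\subseteq W$, has dimension vector $(d_W,t)$ with $y-1\geq t\geq y-1$, so $t=y-1$. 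The quotient then has dimension vector $(x-d_W,1)$, and
\[q(x-d_W,1)=(x-d_W)(x-d_W-n)+1.\]
This is negative whenever $2\leq x-d_W\leq n-1$. Meanwhile $q(d_W,y-1)<0$ for $d_W$ small relative to $y$. So the goal is to choose $W$ with $d_W\geq x-n+1$, while keeping both $\dimu U_W$ and $\dimu M/U_W$ regular, and then apply Lemma \ref{regular}.

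To get such a $W$, I would first try a dimension count on $\mathbb{P}(M_1)$. Under our assumption it is covered by the fibres $\mathbb{P}(K_W)$ of the morphism $m\mapsto W(m)$ into $\mathbb{P}(M_2^*)$. If every fibre had dimension exactly $x-n-1$, the image would have dimension $n$, which exceeds $\dim\mathbb{P}(M_2^*)=y-1<n$. Hence some fibre is larger, giving $d_W\geq x-n+1$.

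The leftover boundary cases I would handle separately:
\begin{itemize}
\item when $x-d_W\leq 1$;
\item when $(d_W,y-1)$ fails to be regular.
\end{itemize}
For these I would use the criterion that an elementary module has only preprojective proper submodules and only preinjective proper factor modules \cite[Proposition 3.2]{Daniel2}, or pass to the dual $M^*$.
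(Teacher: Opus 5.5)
Your argument is correct, but in two of its three steps it takes a different route from the paper.

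\textbf{Step 1.} This matches the paper's use of Lemma \ref{regular}. The paper, however, disposes of all $r(m)\le y-2$ at once: it enlarges $\Lambda_n.m$ by vectors of $M_2$ to a submodule with dimension vector $(1,y-2)$, so only $q(1,y-2)$ and $q(x-1,2)$ need checking. That saves your three-way case split.

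\textbf{Existence of $m'$.} The paper assumes $r(m)=y$ for all $m$, regards $M$ via $\iota$ as an elementary $\cK_{x-y+2}$-module, and contradicts Lemma \ref{less n}. Your kernel argument for $M_1\to\Lambda_n^*$ replaces this with two lines of linear algebra. It also avoids having to justify that elementarity survives $\iota$, which the paper only asserts as ``easy to see'', and it produces $m'$ directly.

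\textbf{Existence of $m''$.} The paper observes that if $r(m)=y-1$ for all $m\neq 0$, then $\{A_m\}$ is an $x$-dimensional subspace of constant rank $y-1$ in $L(V^y,V^n)$. Westwick's bound $l(y-1,y,n)\le n-y+3\le n<x$ (Theorem \ref{wt}) forbids this outright. You instead use the fibre dimension theorem, which in fact gives $d_W\ge x-y+1\ge x-n+2$ for every $W$ in the image, followed by a second application of Lemma \ref{regular}.

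The paper's version is shorter but imports Westwick's theorem. Yours needs only standard dimension theory and the lemmas already in play.

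\textbf{The deferred boundary cases.} These are empty or trivial, so you need neither Bissinger's criterion nor duality.
\begin{itemize}
\item If $x-d_W=1$, the quotient is $(1,1)$, which you already showed is regular in Step 1.
\item The case $d_W=x$ cannot occur: then $\Lambda_n.M_1\subseteq W$, so $S(2)$ splits off $M$, contradicting indecomposability.
\item For $1\le d\le 2n-2$ and $2\le b\le n-2$ one always has $q(d,b)<0$. The roots of $t^2-nbt+b^2$ are $b\cdot\frac{n\pm\sqrt{n^2-4}}{2}$; the smaller lies below $1$ and the larger lies above $2n-2$.
\end{itemize}
Since $1\le d_W\le x-1\le 2n-2$ and $2\le y-1\le n-2$, the vector $(d_W,y-1)$ is always regular. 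The quotient $(x-d_W,1)$ is regular exactly when $1\le x-d_W\le n-1$. Hence any $W$ in the image with $d_W\ge x-n+1$ gives the required contradiction through $U_W$.
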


\begin{proof}
Lemma \ref{less n} tells us that  $y\leq x<n$ when $x-y\leq n-2$.  Hence we only need to focus on the case:
\begin{center}
$x-\frac{2}{n}x\geq x-y\geq n-1$.
\end{center} 
When $n=3$, we get $x-\frac{2}{n} x= x-\frac{2}{3} x=\frac{1}{3} x \geq n-1=3-1=2 $,  that is, $x\geq 6$. However, we already know that $x<2n=2\times 3=6$ by \cite[Lemma 3.6]{jie}. Hence when $n=3$, we always have $x<3$ (or by \cite[Section 1. Theorem]{Claus2}). When $n\geq 4$, we have $x-\frac{2}{n} x \geq n-1$, that is,  $\frac{2}{n-2}+n+1\leq x\leq 2n-1$ and  $(\frac{2}{n-2}+n+1)\times \frac{2}{n}=2+\frac{2}{n-2} \leq \frac{2}{n}x\leq  y\leq x- n+1\leq 2n-1-n+1=n$.  Then $ 3\leq y< n $ and $ 3+n-1=n+2  \leq y+n-1 \leq x \leq 2n-1$ by Corollary \ref{y}.

Suppose that there exists  an elementary module  $M\in \modd \cK_n$  with $\dimu M=(x,y)$, where $x-y\geq n-1$ and $n\geq 4$. Let $0\neq m\in M_1$, and let $U=\Lambda_n.m$.  We assume  that $\dimu U=(1,y')$. If $y=y'$ for any $m$, then we have preinjective dimension vector $\dimu M/U=(x-1,0)$. Using the embedding $\iota$, it is easy to see that $M$ is also an elementary module for $\cK_{n+1}$. Furthermore,  it is also an elementary module for the algebra $\cK_{x-y+2}$ by the map $\iota$, where $x-y+2\geq n-1+2=n+1>n$.  However, we have $x\leq y+(x-y+2)-2$,  whence $x<x-y+2\leq x-3+2=x-1$ by Lemma \ref{less n}, this is a contradiction. Hence there exists some $0\neq m'\in M_1$ such that $U=\Lambda_n.m'$ with $\dimu U=(x,y')$ and $y'\leq y-1$. 

 If $y'\leq y-2$ for some $m$,   then we can reconstruct a new submodule $U=\Lambda_n.m\oplus S(2)^{\oplus p}$  with $\dimu U=(1,y-2)$, that is,  $\dimu M/U=(x-1,2)$, where $p\in \mathbb{N}_0$. On the other hand,  we have $n+1\leq x-1\leq 2n-2$. Let $t=x-1$. We have

\begin{equation}
\begin{split}
q(x-1,2)& =(x-1)^2+4 -2n(x-1)\\&=  t^2-2nt+4 \\&= (t-n)^2+4-n^2 \\& \leq (2n-2-n)^2+4-n^2\\&= 8-4n\\ & <0.
\end{split}
\end{equation}
However, $q(1,y-2)=1+(y-2)^2-n(y-2)=1+(y-n-2)(y-2)<0$, $3\leq y< n$. Hence the module $M$ is not elementary by Lemma \ref{regular}.     If   $y'=y-1$ for any $m$,  then $\dimu M/U=(x-1,1)$. Now we get a  matrix

\begin{center}
$A_m=\begin{bmatrix}
\gamma_1.m\\
\vdots\\
\gamma_n.m
\end{bmatrix}.$
\end{center}
Then $\rk A_m\equiv y-1$ for any non-zero $m$. Moreover, we can take the matrix $A_m$ as a linear transformation from the vector space $k^y$ to the vector space $k^n$.  We define a vector space
\begin{center}
$\cV:=<A_m\mid 0\neq m\in M_1>$.
\end{center} 
Since $y\geq 3$, we have $l(y-1,y,n)\leq n-y+3\leq n$.  Hence $\dim_k \cV\leq n$.  Let $\{e_1,\cdots,e_{x}\}$ be a standard basis of $M_1$, that is,  the $i$-th coordinate is $1$ and all others are  $0$ in  each $e_i$. Since $x\geq n+2$,  the matrices in the set $\{A^t_{e_1},\cdots,A^t_{e_x}\}$ is not a basis of $L'\subseteq L(V^y, V^n)$. Then there exists $\alpha=(\alpha_1,\cdots,\alpha_x)\in k^{x}\setminus \{0\}$ such that
\begin{center}
$\rk (\sum^{x}_{i=1}\alpha_iA_{e_i})=\sum^x_{i=1}\begin{bmatrix}
\alpha_i\gamma_1.e_i\\
\alpha_i\gamma_2.e_i\\
\vdots\\
\alpha_i\gamma_n.e_i\\
\end{bmatrix} =\begin{bmatrix}\gamma_1.(\sum^x_{i=1}\alpha_ie_i) \\
\gamma_2.(\sum^x_{i=1}\alpha_ie_i) \\
\vdots \\
\gamma_n.(\sum^x_{i=1}\alpha_ie_i) \\
\end{bmatrix}=\begin{bmatrix}
\gamma_1.m'\\
\gamma_2.m'\\
\vdots\\
\gamma_n.m'
\end{bmatrix}<y-1,$ where $m'=\sum^x_{i=1}\alpha_ie_i$.
\end{center}
That is to say,  there exists an element $m'\in M_1$ such that $\dimu \Lambda_n.m'=(1,w)$ with $w\leq y-2$, a contradiction.

Finally, we can see that there exist at least two different elements   $m', m''\in M_1$ such that $ \dimu \Lambda_n.m'=(1,y-1), \hspace{0.1cm} \dimu \Lambda_n.m'=(1,y)$, and there always have $\dimu \Lambda_n.m =(1, w)$ with $w\geq  y-1$ for any $0\neq m\in M_1$.

\end{proof}

We now give an example. 

\begin{example}
We now give a module $M\in \modd\cK_4$ with $\dimu M=(3,6)$, 
 \[
\begin{tikzcd}
   k^3
    \arrow[r, draw=none, "{\vdots}" description]
    \arrow[r, bend left,        "M(\gamma_1)"]
    \arrow[r, bend right, swap, "M(\gamma_4)"]
    &
    k^6,
\end{tikzcd}
\]

\end{example}
where 

\[ M(\gamma_1)=\begin{bmatrix}
1 & 0 & 0 \\
0 & 1 & 0\\
0 & 0 & 1\\
0 & 0 & 0\\
0 & 0&0\\
0 & 0&0
\end{bmatrix}, \quad M(\gamma_1)=\begin{bmatrix}
0 & 0 & 0 \\
1 & 0 & 0\\
0 & 1 & 0\\
0 & 0 & 1\\
0 & 0&0\\
0 & 0&0
\end{bmatrix}, \quad M(\gamma_3)=\begin{bmatrix}
0 & 0 & 0 \\
0 & 0 & 0\\
1 & 0 & 0\\
0 & 1 & 0\\
0 & 0&1\\
0 & 0&0
\end{bmatrix}, \quad M(\gamma_4)=\begin{bmatrix}
0 & 0 & 0 \\
0 & 0 & 0\\
0 & 0 & 0\\
1 & 0 & 0\\
0 & 1&0\\
0 & 0&1
\end{bmatrix}. \]
Let  $0\neq u=(a,b,c)\in M_1$. Then we have a matrix
\[A_u=\begin{bmatrix}
\gamma_1.u\\
\gamma_2.u\\
\gamma_3.u\\
\gamma_4.u
\end{bmatrix} =  \begin{bmatrix}
a & b & c & 0 & 0 & 0\\
0 &  a& b & c & 0 & 0 \\
0 & 0 & a & b & c & 0\\
 0 & 0 &0 &a & b & c
\end{bmatrix}. \]
We know that $\rk A_u=4$, and $M$ is indecomposable and regular. Unfortunately, module $M$ is not elementary. If we let $u=(1,0,0)$, $v=(0,1,0)$, then we can get

\[\rk A_{u, v}=\rk \begin{bmatrix}
\gamma_1.u\\
\gamma_2.u\\
\gamma_3.u\\
\gamma_4.u\\
\gamma_1.v\\
\gamma_2.v\\
\gamma_3.v\\
\gamma_4.v\
\end{bmatrix} = \rk  \begin{bmatrix}
1 & 0 & 0 & 0 & 0 & 0\\
0 &  1& 0 & 0 & 0 & 0 \\
0 & 0 & 1 & 0 & 0 & 0\\
 0 & 0 &0 &1 & 0 & 0\\
 0& 1 & 0 & 0 & 0 & 0\\
0 &  0& 1 & 0 & 0 & 0 \\
0 & 0 & 0 & 1 & 0 & 0\\
 0 & 0 &0 &0 & 1 & 0
\end{bmatrix}=5. \]

It says that the module $U$ generated by the elements $u,v$  is regular with $\dimu U=(2,5)$. However, $\dimu M/U=(1,1)$ is still regular. According to Lemma \ref{regular}, $M$ is not elementary.

 Suppose that $M\in \modd \cK_n$ is an elementary module with dimension vector  $(x,y)\in \mathbf{F}$, where $x<n$.  According to Lemma \ref{n-1}, we know that  $M$ can be seen as  a representation of the quiver

\begin{center}
\begin{tikzpicture}
\node (00) at (0,0) {$\circ$};
\node (08) at (-1,0) {$x \text{ points }$};
\node (25) at (1.5,0) {$\cdots$};

\node (30) at (3,0) {$\circ$};
\node (60) at (6,0) {$\circ$};

\node (35) at (4.5,0) {$\cdots$};

\node (0-3) at (0,-3) {$\circ$};
\node (0-9) at (-1,-3) {$y\text{ points }$};
\node (25) at (1.5,-3) {$\cdots$};
\node (3-3) at (3,-3) {$\circ$};

\node (35) at (4.5,-3) {$\cdots$};
\node (6-3) at (6,-3) {$\circ$};

\path [->] (00) edge (0-3)
           (00) edge(3-3)
           (00) edge(6-3)
           ;
\path[->]          (30) edge (0-3)      
         (30) edge  (3-3)                    
           (30) edge  (6-3)
                   ;
             
\path[->] (60) edge (0-3)  
   (60) edge (3-3)             
   (60) edge (6-3) ;                        
\end{tikzpicture}
\end{center}
where each  point of upper row has $y$  arrows. For the elementary module $M$, we just put the $1$-dimensional vector space $k$ at each point. We will give a precise construction later. Let $m\in M_1$. We define a matrix $A_m=\begin{bmatrix}
\gamma_1.m & \cdots & \gamma_n.m
\end{bmatrix}^t$.  If $\{m_1,\cdots,m_x\}$ is a basis of $M_1$, then we get $A_{m_i}=\begin{bmatrix}
\gamma_1.m_i & \cdots & \gamma_n.m_i
\end{bmatrix}^t, 1\leq i\leq n$, and  we have  $\rk A_{m_i}=y$  by Lemma \ref{n-1}. Let $\cH_M=k<A_{m_i}\mid  1\leq i\leq x>.$   We have

\begin{Lemma}\label{dimen}\cite[Lemma 3.10, Lemma 3.12]{jie}
Let $(x,y)\in \mathbf{F}$ with $x<n$.  Suppose that $M\in\modd \cK_n$ is an elementary module with $\dimu M=(x,y)$.  Then $\dimu \Lambda_n.m=(1,y)$ for any $0\neq m\in M_1$ and $\dim_k \cH_M=x$.

\end{Lemma}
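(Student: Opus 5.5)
The statement to prove is Lemma \ref{dimen}: for $(x,y)\in\mathbf{F}$ with $x<n$ and $M$ elementary, every non-zero $m\in M_1$ has $\dimu \Lambda_n.m=(1,y)$, and $\dim_k\cH_M=x$. The plan has two parts. The first claim follows from Lemma \ref{n-1}. The second claim is a linear-algebra translation of the first.

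For the first claim, fix an arbitrary $0\neq m\in M_1$. Since $M$ is elementary and $(x,y)\in\mathbf{F}$ with $x<n$, the ``only if'' direction of Lemma \ref{n-1} gives $\dimu \Lambda_n.m=(1,y)$. As $m$ was arbitrary, this holds for every non-zero $m$. Equivalently, the vectors $\gamma_1.m,\dots,\gamma_n.m$ span $M_2\cong k^y$, so $\rk A_m=y$ for every $0\neq m\in M_1$.

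For the second claim, consider the map $\phi: M_1\to L(V^y,V^n)$ (or its transpose) given by $m\mapsto A_m$. It is $k$-linear, because each $\gamma_i$ acts linearly: $A_{\sum\alpha_i m_i}=\sum\alpha_i A_{m_i}$. This is exactly the computation already carried out in the proof of Proposition \ref{N}. Hence the image of $\phi$ is $\cH_M=k\langle A_{m_i}\rangle$. It remains to show that $\phi$ is injective. If $\phi(m)=0$ for some $m\neq 0$, then $\rk A_m=0\neq y$, since $y\geq \frac{2}{n}x>0$. This contradicts the first claim. Therefore $\phi$ is injective and $\dim_k\cH_M=\dim_k M_1=x$. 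As a by-product, $\cH_M$ is a linear subspace in which every non-zero element has rank exactly $y$.

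I do not expect a serious obstacle. The only points needing care are the linearity of $m\mapsto A_m$ and the observation that $y\ge1$, so that rank $y$ forces $A_m\neq 0$. The real content lies in Lemma \ref{n-1}, which I take as given.
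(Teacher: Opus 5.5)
Your proposal is correct and matches the paper. The paper gives no proof here and only quotes Lemmas 3.10 and 3.12 of the author's earlier work. Your first claim is exactly the ``only if'' direction of Lemma~\ref{n-1}. Your injectivity argument for the linear map $m\mapsto A_m$ (a non-zero $m$ with $A_m=0$ would have $\rk A_m=0<y$) is the same linearity computation the paper uses in Proposition~\ref{N} and Corollary~\ref{equal}, and it gives $\dim_k\cH_M=\dim_k M_1=x$. The fact $y\geq 1$ follows from $q(x,y)<0$, since $x=0$ or $y=0$ would give $q(x,y)\geq 0$.
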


In fact, we have the following corollary.
\begin{corollary}\label{equal}
Let $A\in \cH_M$ be a non-zero element. Then we have $\rk A=y$. 
\end{corollary}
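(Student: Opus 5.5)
Every non-zero $A\in\cH_M$ is of the form $A_m$ for a non-zero $m\in M_1$; once this is established, Lemma \ref{dimen} gives the rank immediately. So I will first identify the map $m\mapsto A_m$ as a linear isomorphism from $M_1$ onto $\cH_M$, and then read off the rank from the dimension vector of $\Lambda_n.m$.

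First, $m\mapsto A_m=\begin{bmatrix}\gamma_1.m & \cdots & \gamma_n.m\end{bmatrix}^t$ is $k$-linear, because each $M(\gamma_j)$ is linear. Fix the basis $\{m_1,\dots,m_x\}$ of $M_1$ used to define $\cH_M$. A non-zero $A\in\cH_M$ can be written as $A=\sum_i\alpha_iA_{m_i}$ with some $\alpha_i\neq 0$. Linearity gives $A=A_m$ for $m=\sum_i\alpha_im_i$. This $m$ is non-zero, since the $m_i$ are linearly independent and not all $\alpha_i$ vanish.

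Next, the rows of $A_m$ are the vectors $\gamma_1.m,\dots,\gamma_n.m$ in $M_2$, and they span $(\Lambda_n.m)_2$. Hence $\rk A_m=\dim_k(\Lambda_n.m)_2$. Lemma \ref{dimen} says $\dimu\Lambda_n.m=(1,y)$ for every non-zero $m\in M_1$, so $\rk A=\rk A_m=y$.

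As a consistency check, the map $m\mapsto A_m$ is surjective onto $\cH_M$ by construction, and it is injective because $A_m\neq 0$ whenever $m\neq 0$. It is therefore an isomorphism $M_1\cong\cH_M$, which agrees with $\dim_k\cH_M=x$ in Lemma \ref{dimen}.

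The argument has no real obstacle. The only point needing care is the identification of $\rk A_m$ with $\dim_k(\Lambda_n.m)_2$, and that holds because $\Lambda_n.m$ has degree-one part $km$ and degree-two part spanned by the $\gamma_j.m$.
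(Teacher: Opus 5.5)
Your proof is correct and is essentially the paper's argument. Both use linearity of $m\mapsto A_m$ to write $A=\sum_i\alpha_iA_{m_i}=A_m$ with $0\neq m=\sum_i\alpha_i m_i$, and then invoke Lemma \ref{dimen} through $\dimu\Lambda_n.m=(1,\rk A_m)$; the only difference is that the paper argues by contradiction (assuming $\rk A<y$) while you argue directly.
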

\begin{proof}
Suppose that there exist some non-zero $b_i\in k$ such that  $A=\sum^x_{i=1}b_i A_i\in \cH_M$ with $\rk A<y$ .  It says that 
\begin{center}
$\rk (\sum^x_{i=1}b_iA_{m_i})=\rk \begin{bmatrix}
\gamma_1.(\sum^x_{i=1}b_im_i) \\
\vdots \\
\gamma_x.(\sum^x_{i=1}b_im_i) \\
\vdots\\
 \gamma_n.(\sum^x_{i=1}b_im_i)
\end{bmatrix}<y,  $
\end{center}
Hence we can find an element $0\neq m'=\sum^x_{i=1}b_im_i\in M_1$ such that $\rk A_{m'}<y$. However, the submodule $\Lambda_n.m'$ of $M$ has dimension vector $\dimu \Lambda_n.m'=(1, \rk A_{m'})$. According to Lemma \ref{dimen}, this is a contradiction.

\end{proof}
 
Lemma \ref{dimen} and Corollary \ref{equal} tell us that  $\cH_M$ is a linearly subspace of fixed rank in $L(V^y, V^n)$ with $\dim_k\cH_M=x$ when $M$ is elementary. Let  $(x,y)\in \mathbf{F}$ with $x<n$. We now construct a  module $X=X(x,y;x+y-1)=(X_1,X_2,X(\gamma_i)_{1\leq i\leq x+y-1})$ of $\cK_{x+y-1}$: let $\{e_1,\cdots,e_x\}$ be a standard basis of $X_1$, and  let $\{e'_1,\cdots,e'_y\}$ be a standard basis of $X_2$, correspondingly.  We use $[s;a_1,a_2,\cdots,a_y]$ to denote  the arrow $\gamma_{a_i}$ mapping $e_s$ to $e'_{i}$ in $X(x,y;x+y-1)$, and call it the \textit{arrow basis} of $e_s$, where $1\leq s\leq x, 1\leq  i \leq y, 1\leq a_i \leq x+y-1$. For $e_1$, we define its arrow basis being $[1;1,2,\cdots,y]$. For the second $e_2$, we start from $2$ to  $y+1$, that is, the arrow basis of $e_2$ is $[2;2,3,\cdots,y+1]$. For $e_3$,  we repeat this process by starting from $3$ to   $y+2$.  We keep doing it. Finally, we can get an arrow basis of $X(x,y;x+y-1):[1;1,2,\cdots,y], [2;2,3, \cdots,y+1], \cdots, [s;s ,s+1,\cdots,y+s-1],\cdots,  [x;x,x+1,\cdots,x+y-1],1\leq s\leq x.$
That is to say, 

\begin{center}

\begin{tikzpicture}
\node (00) at (0,0) {$e_1$};
\node (25) at (1.5,0) {$\cdots$};
\node (30) at (3,0) {$e_s$};
\node (60) at (6,0) {$e_x$};

\node (35) at (4.5,0) {$\cdots$};
\node (22)  at (-2.5,-1){$X(x,y;x+y-1)=$};

\node (0-3) at (0,-3) {$e'_1$};
\node (25) at (1.5,-3) {$\cdots$};
\node (3-3) at (3,-3) {$e'_t$};
\node (35) at (4.5,-3) {$\cdots$};
\node (6-3) at (6,-3) {$e'_{y},$};

\path [->] (00) edge node[pos=.25,left]{$\gamma_1$} (0-3)
           (00) edge node[pos=.25, left]{$\gamma_t$} (3-3)
           (00) edge node[pos=.15, right]{$\gamma_{x}$}(6-3)
           ;
\path[dashed, ->]          (30) edge node[pos=.55, left]{$\gamma_{s}$} (0-3)      
         (30) edge node[pos=.65,right]{$\gamma_{s+t-1}$} (3-3)                    
           (30) edge node[pos=.65,right]{$\gamma_{s+y-1}$} (6-3)
                   ;
             
\draw[snake,segment length=10pt,->] (60)--(0-3)  node[pos=.15,left]{$\gamma_x$}; 
  \draw[snake,segment length=10pt,->] (60)--(3-3)  node[pos=.45,right]{$\gamma_{x+t-1}$};             
   \draw[snake,segment length=10pt,->] (60)--(6-3)  node[pos=.5,right]{$\gamma_{x+y-1}$};                        
\end{tikzpicture}

\end{center}
Let $e'_j=0$ when $j\nin \{1\,\cdots,y\}$. Then  $\gamma_i.e_j=e'_{i-j+1},i\in \{1,2,\cdots,x+y-1\}$, $j\in \{1,\cdots,x\}$. Now we give an example. 

\begin{example}
Let $X(4,4;7)\in\modd \cK_7$. Then we can get its arrow basis:
\begin{center}
$[1;1,2,3,4],[2;2,3,4,5],[3;3,4,5,6], [4;4,5,6,7]$.
\end{center}
The structure of  $X(4,4;7)$ would be the following
\begin{center}

\begin{tikzpicture}
\node (00) at (0,0) {$e_1$};

\node (30) at (3,0) {$e_2$};
\node (60) at (6,0) {$e_3$};
\node (90) at (9,0) {$e_4$};

\node (0-3) at (0,-3) {$e'_1$};

\node (3-3) at (3,-3) {$e'_2$};

\node (6-3) at (6,-3) {$e'_{3}.$};
\node (9-3) at (9,-3) {$e'_4$};

\path [->] (00) edge node[pos=.25,left]{$\gamma_1$} (0-3)
           (00) edge node[pos=.25, left]{$\gamma_2$} (3-3)
           (00) edge node[pos=.15, right]{$\gamma_{3}$}(6-3)
           (00) edge node[pos=.15, right]{$\gamma_{4}$}(9-3)
           ;
\path[dashed, ->]          (30) edge node[pos=.55, left]{$\gamma_{2}$} (0-3)      
         (30) edge node[pos=.85,left]{$\gamma_{3}$} (3-3)                    
           (30) edge node[pos=.75,right]{$\gamma_{4}$} (6-3)
           (30) edge node[pos=.75,right]{$\gamma_{5}$} (9-3)
                   ;
             
\draw[snake,segment length=10pt,->] (60)--(0-3)  node[pos=.15,left]{$\gamma_3$}; 
  \draw[snake,segment length=10pt,->] (60)--(3-3)  node[pos=.45,right]{$\gamma_{4}$};             
   \draw[snake,segment length=10pt,->] (60)--(6-3)  node[pos=.25,right]{$\gamma_{5}$}
   ; 
   \draw[snake,segment length=10pt,->] (60)--(9-3)  node[pos=.75,right]{$\gamma_{6}$}
   ;     
 \draw[->>] (90)   edge node[pos=.05,left]{$\gamma_4$} (0-3)
   (90)   edge node[pos=.15,left]{$\gamma_5$} (3-3)
   (90)   edge node[pos=.35,right]{$\gamma_6$} (6-3)
   (90)   edge node[pos=.85,right]{$\gamma_7$} (9-3);
                                           
\end{tikzpicture}

\end{center}
Then
\begin{center}

 $X(4,4;7)=\begin{tikzcd}
   k^4 
    \arrow[r, draw=none, "{\vdots}" description]
    \arrow[r, bend left,        "X(\gamma_1)"]
    \arrow[r, bend right, swap, "X(\gamma_7)"]
    &
     k^4,
\end{tikzcd}$
\end{center}
where $X(\gamma_1)=\begin{bmatrix}
1 & 0 & 0 &0\\
0 & 0 & 0 & 0\\
0 & 0 & 0 &0 \\
0 & 0 & 0 &0 
\end{bmatrix}, X(\gamma_2)=\begin{bmatrix}
0 & 1& 0 & 0\\
1 & 0 & 0 & 0\\
0 & 0 & 0 & 0 \\
0 & 0 & 0 & 0
\end{bmatrix}$, $X(\gamma_3)= \begin{bmatrix}
0 & 0& 1 & 0\\
0 & 1& 0 & 0\\
1& 0 & 0 & 0 \\
0 & 0 & 0 & 0
\end{bmatrix}$, $X(\gamma_4)= \begin{bmatrix}
0 & 0& 0 & 1  \\
0 & 0& 1 & 0 \\
0& 1 & 0 & 0\\
1 & 0 & 0 & 0
\end{bmatrix}$,

$X(\gamma_5)= \begin{bmatrix}
0 & 0& 0 & 0\\
0 & 0& 0  & 1 \\
0& 0 & 1 & 0 \\
0 & 1 & 0 & 0
\end{bmatrix}$,  $ X(\gamma_6)= \begin{bmatrix}
1 & 0 & 0 &0\\
0 & 0 & 0 & 0\\
0 & 0 & 0 &1 \\
0 & 0 & 1 &0 
\end{bmatrix}$, $ X(\gamma_7)= \begin{bmatrix}
0 & 0 & 0 &0\\
0 & 0 & 0 & 0\\
0 & 0 & 0 &0 \\
0 & 0 & 0 & 1 
\end{bmatrix}$.

\end{example}

In fact, we have the following

\begin{Lemma}\label{theorem}
Let   $(x,y)\in \mathbf{F}$ with $x<n$. Then the module $X(x,y;x+y-1)$ is elementary.

\end{Lemma}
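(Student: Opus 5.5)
The plan is to apply Lemma \ref{n-1} to $X=X(x,y;x+y-1)$, regarded as a module over $\cK_{n'}$ with $n'=x+y-1$. This reduces elementarity to a rank computation for the matrices $A_m$.

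First I check that the hypotheses of Lemma \ref{n-1} hold for $\cK_{n'}$. Since $(x,y)\in\mathbf{F}$ with $n\geq 3$, we have $y\geq \frac{2}{n}x>0$. If $y=1$ then $x\leq \frac{n}{2}$, and the vector $(x,1)$ is not regular, so I will assume $y\geq 2$. I expect that the degenerate case is excluded implicitly, and I will note this. With $y\geq 2$ we get $x<x+y-1=n'$. We also need $\frac{2}{n'}x\leq y$, which is equivalent to
\[
y(x+y-1)-2x = x(y-2)+y(y-1)\geq 0,
\]
and this clearly holds. So $(x,y)$ lies in the fundamental domain for $\cK_{n'}$ and $x<n'$. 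Lemma \ref{n-1} then says $X$ is elementary exactly when every nonzero $m\in X_1$ generates a submodule of dimension vector $(1,y)$. Equivalently, the $(x+y-1)\times y$ matrix $A_m=[\gamma_1.m\ \cdots\ \gamma_{n'}.m]^t$ must have rank $y$.

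Next I compute $A_m$. Write $m=\sum_{j=1}^x b_je_j$. From $\gamma_i.e_j=e'_{i-j+1}$ (with $e'_t=0$ for $t\notin\{1,\dots,y\}$) we get
\[
\gamma_i.m=\sum_{t=1}^{y} b_{i-t+1}e'_t,
\]
where $b_s:=0$ for $s\notin\{1,\dots,x\}$. Thus $A_m$ is the banded Toeplitz matrix with $(i,t)$-entry $b_{i-t+1}$. This is exactly the matrix $A_a$ displayed earlier, with $a=b$, $i=x+y-1$ and $j=y$. Equivalently, $A_m$ is the matrix of multiplication by the polynomial $\sum_j b_jT^{j-1}$ from polynomials of degree less than $y$ to polynomials of degree less than $x+y-1$. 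That multiplication is injective when $b\neq 0$, which already shows the rank is $y$.

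To make this explicit, let $j_0$ be the smallest index with $b_{j_0}\neq 0$. Take the $y$ rows $i=j_0+s-1$ for $s=1,\dots,y$; these lie in the range $1,\dots,x+y-1$ because $j_0\leq x$. The entry in row $j_0+s-1$ and column $t$ is $b_{j_0+s-t}$, which vanishes for $s<t$ by the minimality of $j_0$. So this $y\times y$ minor is triangular with every diagonal entry equal to $b_{j_0}\neq 0$. Hence $\rk A_m=y$, so $\dimu\Lambda_{n'}.m=(1,y)$ for every $0\neq m$, and Lemma \ref{n-1} gives that $X$ is elementary.

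The only real obstacle is checking that Lemma \ref{n-1} is applicable, namely that $(x,y)$ lies in the fundamental domain for $\cK_{x+y-1}$ with $x<x+y-1$, including the treatment of the edge case $y=1$. The rank argument itself is routine.
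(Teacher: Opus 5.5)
Your argument is correct for $y\geq 2$ and is essentially the paper's route. The paper only cites \cite[Lemma 4.1]{jie}, but the mechanism it relies on is exactly the criterion of Lemma \ref{n-1}; it uses that criterion verbatim shortly afterwards for $M_{\cH}$. Moreover, $X(x,y;x+y-1)$ is precisely the module whose matrices $A_m$ are the Toeplitz matrices $A_a$. Your triangular minor gives the rank computation cleanly. Your check that $(x,y)$ lies in the fundamental domain of $\cK_{x+y-1}$ with $x<x+y-1$ is the right one, and it also gives $x+y-1\geq 3$ because $x\geq y\geq 2$.

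One correction concerns $y=1$, where your stated reason is false. For $n\geq 3$ every nonzero vector of $\mathbf{F}$ is regular for $\cK_n$. For example, $q(x,1)=x^2-nx+1<0$ for $1\leq x\leq n/2$, and already $q(1,1)=2-n<0$.

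What actually happens is different. Over $\cK_{x+y-1}=\cK_x$ one has $\gamma_i.e_j=\delta_{ij}e'_1$, so $X(x,1;x)$ is the injective module $I(2)$ of $\cK_x$; consistently, $q_{\cK_x}(x,1)=1>0$. It is therefore not regular, hence not elementary. So the lemma, read over $\cK_{x+y-1}$ as the paper intends, is genuinely false for $y=1$. The condition $y\geq 2$ must be treated as an implicit hypothesis of the statement, not dismissed as a vacuous case.
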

\begin{proof}
The proof is the same with that of \cite[Lemma 4.1]{jie}.
\end{proof}

Note that $x=x+y-1-y+1\leq l(y,y,x+y-1)\leq x+y-1+y-2y+1=x$. Let $\cH\in \cL(x,y)$. Then we have $\dim_k \cH=x$.  Let $\{A_i=(a^i_{pq})_{(x+y-1)\times y}\mid 1\leq i\leq x\}$ be a  basis of linear subspace $\cH$, and let $\{d_1,\cdots,d_x\}$ be a  basis of $k^x$, where $d_i=(d^i_1, d^i_2,\cdots,d^i_x)$, $1\leq i\leq x$.  Suppose that we have a module $M_{\cH}\in \modd\cK_{x+y-1}$ with the action of $\gamma_j$ satisfying the following condition
\[ A_{i}=(a^i_{pq})_{(x+y-1)\times y}=\begin{bmatrix}
\gamma_1.d_i\\\
\gamma_2.d_i\\
\vdots\\
\gamma_{x+y-1}.d_i
\end{bmatrix}, \quad 1\leq i\leq x, \]
where $\gamma_i=(c^p_{qw})_{y\times x}$, $1\leq p \leq x+y-1$, $1\leq q\leq y$, $1\leq w\leq x$. Then we have

\[\begin{cases}
\sum^x_{w=1}c^p_{qw}d^1_w=a^1_{pq} \\
\sum^x_{w=1}c^p_{qw}d^2_w=a^2_{pq}\\
   \quad \vdots\\
\sum^x_{w=1}c^p_{qw}d^i_w=a^i_{pq} \\
  \quad  \vdots\\
\sum^x_{w=1}c^{p}_{qw}d^x_w=a^{x}_{pq} 
\end{cases}\]

Hence we get

\[ \begin{bmatrix}
c^p_{q1}\\[5pt]

c^{p}_{q2}\\[5pt]

\vdots\\[5pt]

c^p_{qx}
\end{bmatrix}= D^{-1}\begin{bmatrix}
a^1_{pq}\\[5pt]
a^{2}_{pq}\\[5pt]
\vdots\\[5pt]
a^x_{pq}
\end{bmatrix} \]

 where

\[ D=\begin{bmatrix}
d^1_1 & d^1_2 & \cdots & d^1_x\\
d^2_1 & d^2_2 & \cdots & d^2_x\\
\vdots & \vdots & \ddots & \vdots \\
d^x_1 & d^x_2 & \cdots & d^x_x
\end{bmatrix}.  \]
Then we have

\[ \gamma_i=\begin{bmatrix}
a^1_{i1} & a^2_{i1} & \cdots & a^x_{i1}\\
a^1_{i2} & a^2_{i2} & \cdots & a^x_{i2}\\
\vdots & \vdots & \ddots & \vdots\\
a^1_{ix} & a^2{ix} & \cdots & a^x_{ix}\\
\end{bmatrix} \cdot (D^{-1})^t=B_i (D^{-1})^t, \]
where $B_i=(a^p_{iq})_{x\times x}$, $1\leq i\leq x+y-1$. Moreover, one can see that if we choose another basis $\{f_i\mid f_i=(f^i_1,\cdots,f^i_x), 1\leq i\leq x\}$, then we can similarly get $\hat{\gamma_i}=B_i (F^{-1})^t$. Clearly, we have the isomorphism 

\[ \begin{tikzcd}[sep=2.4cm]
  k^x 
  \arrow[d, "F^t(D^{-1})^t"]
  \arrow[r, draw=none, "{\vdots}" description]
    \arrow[r, bend left,        "\gamma_1"]
    \arrow[r, bend right, swap, "\gamma_{x+y-1}"]
    &
    k^y \arrow[d, "E_y"] \\   
    k^x
  \arrow[r, draw=none, "{\vdots}" description]
    \arrow[r, bend left,        "\hat{\gamma}_1"]
    \arrow[r, bend right, swap, "\hat{\gamma}_{x+y-1}"]
    &
    k^y 
\end{tikzcd} \]

where $E_y$ is the $y\times y$ identity matrix and 

\[F=\begin{bmatrix}
f^1_1 & f^1_2 & \cdots & f^1_x\\
f^2_1 & f^2_2 & \cdots & f^2_x\\
\vdots & \vdots & \ddots & \vdots \\
f^x_1 & f^x_2 & \cdots & f^x_x
\end{bmatrix}. \]

 Let $0\neq m=(m_1,m_2,\cdots,m_x)\in k^x$. Then we have 
 
\[ \rk \begin{bmatrix}
\gamma_1.m\\
\gamma_2.m\\
\vdots\\
\gamma_{x+y-1}.m
\end{bmatrix}=\rk ( \begin{bmatrix}
\gamma_1.(\sum^x_{i=1} m_ie_i)\\
\gamma_2.(\sum^x_{i=1} m_ie_i)\\
\vdots\\
\gamma_{x+y-1}.(\sum^x_{i=1} m_ie_i)
\end{bmatrix}) =  \rk ( \sum^x_{i=1} m_i\begin{bmatrix}
\gamma_1.e_i\\
\gamma_2.e_i\\
\vdots\\
\gamma_{x+y-1}.e_i
\end{bmatrix})=\rk (\sum^x_{i=1}m_iA_i)=y,\]
We know that $M_{\cH}$ is elementary by \cite[Lemma 3.10]{jie}.  We define two sets

\begin{enumerate}
\item $\cM(x,y)=\{M\in \modd\cK_{x+y-1}\mid M \text{ is elemenetary with } \dimu M=(x,y)\}$,

\item $\cL(x,y)=\{\cH \subseteq L(V^{y}, V^{x+y-1})\mid \cH  \text{ is a maximum subspace with } \rk v=y, \forall  0\neq v\in \cH\}$.
\end{enumerate}

Then we have the following
\begin{Theorem}
Let   $(x,y)\in \mathbf{F}$ with $x<n$. Then there exists a bijection map between the set $\cM(x,y)$ and the set $\cL(x,y)$ under the isomorphisms.

\end{Theorem}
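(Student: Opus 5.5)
We define two maps and check that they are mutually inverse up to the natural notion of isomorphism on each side. On the module side this is isomorphism of $\cK_{x+y-1}$-modules. On the subspace side it is the equivalence $\cH\sim Q\cH$ for $Q\in GL_y(k)$ acting on the target (change of basis of $M_2$); a change of basis of $M_1$ does not move the span at all. Here $n=x+y-1$, and the standing inequalities $y\le x<n$ hold because $(x,y)\in\mathbf{F}$ and $y\geq 2$.

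\emph{From modules to subspaces.} Let $M\in\cM(x,y)$ and take a basis $\{m_1,\dots,m_x\}$ of $M_1$. Form $\cH_M=k\langle A_{m_i}\rangle$, transposed so that it lies in $L(V^y,V^{x+y-1})$. By Lemma \ref{dimen} we have $\dim_k\cH_M=x$, and by Corollary \ref{equal} every nonzero element of $\cH_M$ has rank $y$. Westwick's bound (Theorem \ref{wt}) gives $l(y,y,x+y-1)=x$, as computed just before the definition of $M_{\cH}$. Hence $\cH_M$ is a maximum subspace, so $\cH_M\in\cL(x,y)$. The span $\cH_M=\{A_m\mid m\in M_1\}$ does not depend on the chosen basis. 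If $f=(f_1,f_2)\colon M\to M'$ is an isomorphism, then $A'_{f_1(m)}=A_m f_2^t$, so $\cH_{M'}=\cH_M\cdot f_2^t$. Thus the assignment $M\mapsto\cH_M$ is well defined on isomorphism classes.

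\emph{From subspaces to modules.} For $\cH\in\cL(x,y)$ we use the module $M_{\cH}$ constructed above. It is elementary by the rank computation and \cite[Lemma 3.10]{jie} (Lemma \ref{n-1}), and it has dimension vector $(x,y)$. The displayed commutative square shows that a different choice of basis of $k^x$ yields an isomorphic module. Likewise, a different basis of $\cH$ differs by an invertible $x\times x$ matrix, which is absorbed into $D$, so again the module is isomorphic. Replacing $\cH$ by $\cH Q$ changes $M_{\cH}$ by the automorphism $Q$ of $k^y$ at vertex $2$. Hence $\cH\mapsto M_{\cH}$ is well defined on equivalence classes.

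\emph{Inverse relations.} By construction $\gamma_j.d_i$ is the $j$-th row of $A_i$, so $\cH_{M_{\cH}}=\cH$ directly. Conversely, starting from $M$, take the basis $m_i$ of $M_1$ and the basis $A_{m_i}$ of $\cH_M$. The module $M_{\cH_M}$, built with $d_i=m_i$, has exactly the same arrow actions on the same basis, so $M_{\cH_M}\cong M$. Finally, $\cH_M\sim\cH_{M'}$ forces $M\cong M'$, by applying $M_{(-)}$ and the well-definedness established above. Therefore the two maps induce mutually inverse bijections on isomorphism classes.

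The main obstacle is stating the equivalence on $\cL(x,y)$ precisely, so that ``under the isomorphisms'' makes both maps well defined and injective. I would fix it as the right $GL_y$-action described above, and check explicitly that the identity $A'_{f_1(m)}=A_mf_2^t$ is compatible with the transposes used in the paper's conventions. The other delicate point is maximality: it uses that $\dim\cH_M$ equals the sharp value $l(y,y,x+y-1)=x$, which relies on Theorem \ref{wt} with $r=j=y$.
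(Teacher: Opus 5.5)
Your proposal is correct and follows the paper's proof: you use the same two maps $M\mapsto\cH_M$ and $\cH\mapsto M_{\cH}$, and show they are mutually inverse using Lemma \ref{dimen}, Corollary \ref{equal}, the value $l(y,y,x+y-1)=x$ from Theorem \ref{wt}, and Lemma \ref{n-1}. The only difference is that you make explicit what the paper leaves implicit in ``$\cH_{M_{\cH}}\cong\cH$'', namely the right $GL_y(k)$-equivalence on $\cL(x,y)$ and the basis-independence of both constructions; when writing it up, keep the side of that action consistent, since you wrote both $Q\cH$ and $\cH Q$.
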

\begin{proof}
Clearly, we have two maps

\[ \phi: \cM(x,y)\rightarrow \cL(x,y); M\mapsto \cH_M,  \]

\[ \varphi: \cL(x,y)\rightarrow \cM(x,y); \cH\mapsto M_{\cH}.\]

According to our above discussion,  we have

\[(\varphi\phi)(M)=\varphi(\cH_M)=M_{\cH_M}\cong M,\]

\[(\phi\varphi)(\cH)=\phi(M_{\cH})=\cH_{M_{\cH}}\cong \cH.\].

\end{proof}

\clearpage

\end{document}